\documentclass[11pt]{amsart}

\usepackage{amssymb,amsthm,amsmath}

\usepackage{mathtools}
\usepackage{hyperref}
\RequirePackage[dvipsnames,usenames]{xcolor}

\input{kmacros3.sty}

\usepackage{tikz}
\usepackage{tikz-cd}
\usetikzlibrary{cd}
\usepackage{graphicx}
\usepackage[all,cmtip]{xy}

\usepackage{mabliautoref}
\numberwithin{equation}{theorem}

\usepackage{bm}
\usepackage{pifont}
\usepackage{upgreek}

\usepackage{eucal}
\usepackage{ulem}
\usepackage{stmaryrd}

\numberwithin{equation}{theorem}

\usepackage{fullpage}

\usepackage{enumerate}
\usepackage{calc}

\usepackage{verbatim}
\usepackage{alltt}
\usepackage{scalerel,stackengine}

\begin{document}
\title{Factoring maps to big Cohen-Macaulay algebras through blowups}
\author{Karl Schwede}
\address{Department of Mathematics, University of Utah, Salt Lake City, UT 84112, USA}
\email{schwede@math.utah.edu}


\begin{abstract}
We show that for a reduced universally catenary equidimensional Noetherian local ring $R$, any sufficiently functorial big Cohen-Macaulay algebra assignment $R \mapsto B_R$, and any proper birational map $Y \to \Spec R$,  there exists a factorization $R \to \myR\Gamma(Y, \cO_Y) \to B_R$ in the derived category of $R$-modules.  
\end{abstract}

\maketitle

\section{Introduction}

Big Cohen-Macaulay modules and algebras were identified by Hochster as a tool for proving deep results in commutative algebra \cite{HochsterCMModules,HochsterTopicsInTheHomologicalTheory}, \cf \cite{Sharp.CMPropertiesForBalancedBigCM}.  Big Cohen-Macaulay algebras associated to an excellent Noetherian local domain $R$ were shown to exist in characteristic $p > 0$ in \cite{HochsterHunekeInfiniteIntegralExtensionsAndBigCM} where Hochster and Huneke showed that $R^+$, the integral closure of $R$ in an algebraic closure of its fraction field, is big Cohen-Macaulay.  From this the existence of big Cohen-Macaulay algebras in characteristic zero can also be deduced.  Also see \cite{HochsterHunekeApplicationsofBigCM,HunekeLyubeznikAbsoluteIntegralClosure,SmithTightClosureParameter}.  In mixed characteristic, big Cohen-Macaulay algebras were shown to exist by Andr\'e in \cite{AndreDirectsummandconjecture,AndreWeaklyFunctorialBigCM}, see also \cite{GabberMSRINotes,HeitmannMaBigCohenMacaulayAlgebraVanishingofTor}.  More recently, Bhatt showed that the $p$-adic completion $\widehat{R^+}$ of $R^{+}$ is big Cohen-Macaulay in mixed characteristic \cite{BhattAbsoluteIntegralClosure}, also see \cite{SchoutensCanonicalBCMAlgebras} in characteristic zero.

An interesting feature of $\widehat{R^+}$ in positive and mixed characteristic is that if $\pi : Y \to \Spec R$ is any proper birational map, one has a factorization 
\[
    R \to \myR\Gamma(Y, \cO_Y) \to \widehat{R^+}.
\]
In characteristic $p > 0$, this was shown in \cite{BhattDerivedDirectSummand} and in mixed characteristic  it follows quickly from \cite{BhattAbsoluteIntegralClosure}.  

In this note we show that such a factorization is a formal consequence of any sufficiently functorial assignment of big Cohen-Macaulay algebras.  Our main result is the following.

\begin{mainthm*}[{\autoref{thm.MainTechnicalTheorem}}]
    Suppose $(R, \fram)$ is a reduced Noetherian universally catenary equidimensional local ring, $I \subseteq R$ is an ideal of positive height and $S = R \oplus It \oplus I^2t^2 \oplus \dots = R[It]$ is the associated Rees algebra.  Set $Y = \Proj S$.  Let $\frn$ denote the homogeneous maximal ideal of $S$ and suppose that $\nu : S_{\frn} \to R$ is the map obtained by localizing the projection onto degree $0$ ($S \to R$) at $\frn$. 
    
    Suppose there is a balanced big Cohen-Macaulay $S_{\frn}$-algebra $B_S$, a  balanced big Cohen-Macaulay $R$-algebra $B_R$, and an $S$-module map  $B_S \to B_R$ such that  
    \[
        \xymatrix{
            S_{\frn} \ar[d] \ar[r]^{\nu} & R \ar[d] \\
            B_S \ar[r] & B_R.
        }
    \]
    commutes.  Then the map $R \to B_R$ factors as 
    \[
        R \to \myR\Gamma(Y, \cO_Y) \to B_R.
    \]
\end{mainthm*}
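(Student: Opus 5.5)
The plan is to convert the factorization problem into the vanishing of a single obstruction map, and then kill that obstruction using the commutative square together with the big Cohen--Macaulay property of $B_S$.

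\textbf{Step 1 (the obstruction).} Let $U = \Spec S \setminus V(S_+)$. There is the standard exact triangle
\[
\myR\Gamma_{S_+}(S) \to S \to \myR\Gamma(U, \cO_U) \xrightarrow{+1}.
\]
Since $\myR\Gamma(U,\cO_U) = \bigoplus_{n \in \mathbb{Z}} \myR\Gamma(Y, \cO_Y(n))$, taking degree-$0$ parts gives a triangle of $R$-complexes
\[
K := \myR\Gamma_{S_+}(S)_0 \xrightarrow{\ \alpha\ } R \to \myR\Gamma(Y, \cO_Y) \xrightarrow{+1}.
\]
Apply $\Hom_{D(R)}(-, B_R)$. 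The map $R \to B_R$ factors through $\myR\Gamma(Y,\cO_Y)$ if and only if the composite $K \xrightarrow{\alpha} R \to B_R$ is zero in $D(R)$. Note that $K \in D^{\geq 1}$: the ideal $I$ has positive height and $R$ is reduced, so $I$ contains a nonzerodivisor, hence so does $S_+$, and therefore $H^0_{S_+}(S)=0$. This alone does not force the composite to vanish, because maps out of $D^{\geq 1}$ into a module can be nonzero. So the real content is to show this obstruction vanishes.

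\textbf{Step 2 (moving the obstruction to $B_S$).} The projection $S \to S/S_+ = R$ is $S$-linear, and its restriction to degree $0$ is the identity. Hence $K \to R \to B_R$ equals the composite
\[
K \hookrightarrow \myR\Gamma_{S_+}(S) \to S \to S_{\frn} \xrightarrow{\nu} R \to B_R .
\]
Some care is needed here to realize the degree-$0$ summand as a genuine map in $D(S)$ or $D(R)$; I would use a graded Čech model for $\myR\Gamma_{S_+}$ to do this. By the hypothesized commutative square, the composite equals
\[
K \to \myR\Gamma_{S_+}(S_{\frn}) \to \myR\Gamma_{S_+}(B_S) \to \myR\Gamma_{S_+}(B_R) = B_R .
\]
The last equality holds because $S_+$ acts on $B_R$ through $\nu$, hence by zero. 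So it suffices to show that the natural map $\myR\Gamma_{S_+}(B_S) \to B_R$, or at least its restriction along $K$, vanishes.

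\textbf{Step 3 (using the big Cohen--Macaulay property; the main obstacle).} Here the hypotheses on $R$ enter. Because $R$ is equidimensional and universally catenary, $\dim S_{\frn} = \dim R + 1$, and $S_+S_{\frn}$ has height exactly one. Moreover, a system of parameters $x_1, \dots, x_d$ of $R$ together with a suitable element $ft$ (with $f \in I$ a nonzerodivisor) should form a system of parameters of $S_{\frn}$, arranged so that $\sqrt{S_+ + (x_1,\dots,x_d)} = \frn$. I would first try to replace $\myR\Gamma_{S_+}$ by $\myR\Gamma_{\frn}$ after applying $\myR\Gamma_{\fram}$ on the $R$ side. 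The point is that $\myR\Gamma_{\frn}(B_S)$ is concentrated in the single degree $d+1$, since $B_S$ is balanced big Cohen--Macaulay, while $\myR\Gamma_{\fram}(B_R)$ is concentrated in degree $d$. A map between complexes concentrated in the wrong degrees, namely from $D^{\geq d+1}$ into a module placed in degree $d$, is zero. This shows that $\myR\Gamma_{\fram}$ of the obstruction vanishes.

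The hardest step is then to descend this vanishing back to the obstruction itself. I expect the obstruction can be written as a map from $K$ to the $\fram$-adically complete module $B_R$ in a form that is detected after applying $\myR\Gamma_{\fram}$, by Greenlees--May duality, $\Hom(\myR\Gamma_\fram X, \myR\Gamma_\fram Y) = \Hom(X, \mathbf{L}\Lambda_\fram \myR\Gamma_{\fram}Y)$. To get there I would first pass to completions of $R$ and $B_R$; this is harmless because $B_R$ can be replaced by its $\fram$-adic completion, which is still balanced big Cohen--Macaulay. If that descent turns out to be awkward, the fallback is to verify the vanishing of $K \to B_R$ directly on a Čech complex: $K$ is computed by a complex with terms $(S_{g_1\cdots g_i})_0$, and the image of the $H^1$-classes in $B_R$ can be killed using the fact that $ft$ is a nonzerodivisor on $B_S$ whose image in $B_R$ is $0$.
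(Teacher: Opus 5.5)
\textbf{There is a genuine gap: Step 3's degree argument runs in the wrong direction, and the descent step is missing a local-to-global argument.}

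\textbf{The degree argument.} Your Steps 1 and 2 agree with the paper. The obstruction is $K=[\mathbf{R}\Gamma_{S_+}(S)]_0\to R\to B_R$, and after applying $\mathbf{R}\Gamma_{\mathfrak m}$ it factors through $\mathbf{R}\Gamma_{\mathfrak n}(S)\to\mathbf{R}\Gamma_{\mathfrak n}(B_S)$. The orthogonality that actually holds is $\operatorname{Hom}(D^{\le d},D^{\ge d+1})=0$. A map from an object concentrated in degree $d+1$ to one concentrated in degree $d$ is an extension class, $\operatorname{Hom}(A[-d-1],N[-d])=\operatorname{Ext}^1(A,N)$, and need not vanish. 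For instance, take $R$ regular, $I=(f)$, $B_S=S_{\mathfrak n}\cong R[u]_{(\mathfrak m,u)}$ and $B_R=R$. Then in $D(S)$ the map $\mathbf{R}\Gamma_{\mathfrak n}(B_S)\to\mathbf{R}\Gamma_{\mathfrak n}(B_R)$ corresponds to the extension $0\to H^d_{\mathfrak m}(R)\to H^{d+1}_{\mathfrak n}(S_{\mathfrak n})\xrightarrow{u}H^{d+1}_{\mathfrak n}(S_{\mathfrak n})\to 0$. This extension is nonsplit because the injective hull $H^{d+1}_{\mathfrak n}(S_{\mathfrak n})$ is indecomposable. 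A telling symptom is that your Step 3 uses no property of $K$ at all.

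The paper instead kills the \emph{first} map $\mathbf{R}\Gamma_{\mathfrak m}(K)\to\mathbf{R}\Gamma_{\mathfrak n}(S)\to\mathbf{R}\Gamma_{\mathfrak n}(B_S)$. The target lies in $D^{\ge d+1}$ because $\dim S_{\mathfrak n}=d+1$, and the source lies in $D^{\le d}$. That last fact is the input you are missing. It reduces to surjectivity of $H^d_{\mathfrak m}(R)\to H^d_{\mathfrak m}(\mathbf{R}\Gamma(Y,\mathcal{O}_Y))$. This in turn comes from the bounds $\dim\operatorname{Supp}H^j(Y,\mathcal{O}_Y)\le d-j-1$ for $j>0$, together with birationality of $R\to\Gamma(Y,\mathcal{O}_Y)$.

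\textbf{The descent step.} Replacing $B_R$ by $\widehat{B_R}$ is not harmless. A factorization of $R\to\widehat{B_R}$ through $\mathbf{R}\Gamma(Y,\mathcal{O}_Y)$ does not give one for $R\to B_R$, because $\operatorname{Hom}_{D(R)}(K,B_R)\to\operatorname{Hom}_{D(R)}(K,\widehat{B_R})$ need not be injective. By Greenlees--May, maps into the completion only see $\mathbf{R}\Gamma_{\mathfrak m}$ of the map, while $K$ has cohomology supported on positive-dimensional closed subsets. Moreover, $B_R$ can carry information away from $\mathfrak m$: for $R$ regular and $0\neq x\in\mathfrak m$, the algebra $R\times R_x$ is balanced big Cohen--Macaulay, but its completion is just $\widehat R$. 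Your Greenlees--May argument is exactly Ma's remark in the paper, and it works only when $B_R$ is already $\mathfrak m$-adically complete.

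For general $B_R$ the paper does two further things.
\begin{enumerate}
\item It localizes the whole Rees-algebra diagram at every $\mathfrak q\in\operatorname{Spec}R$. This uses that balanced big Cohen--Macaulay modules localize over universally catenary equidimensional rings. The result is that $\mathbf{R}\Gamma_{\mathfrak q}(K_{\mathfrak q})\to\mathbf{R}\Gamma_{\mathfrak q}((B_R)_{\mathfrak q})$ vanishes for \emph{every} $\mathfrak q$.
\item It proves a local-to-global lemma. If $L\in D^b_{fg}(R)$ satisfies $\dim\operatorname{Supp}H^j(L)\le d-j$ and $M$ is weakly cohomologically Cohen--Macaulay, then a map $L\to M$ that vanishes after $\mathbf{R}\Gamma_{\mathfrak q}((-)_{\mathfrak q})$ for all $\mathfrak q$ is zero. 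The proof replaces $M$ by its Cousin complex and lifts through the stupid filtration one height at a time; alternatively one can use Bhatt's perverse $t$-structure argument.
\end{enumerate}
Your \v{C}ech fallback does not substitute for this. Vanishing of a map in $D(R)$ is not detected by killing individual cohomology classes, and $K$ has cohomology in several degrees.
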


In particular, given a sufficiently weakly functorial balanced big Cohen-Macaulay algebra assignment (for instance, one that is weakly functorial for surjections from Rees algebras), we have that $R \to B_R$ factors through $\myR\Gamma(Y, \cO_Y)$ for any proper birational map $Y \to \Spec R$.  See \autoref{cor.MainCorollary}.  

There was previous evidence that something like this might be true.  For instance, in \cite[Proposition 3.7]{MaSchwedeSingularitiesMixedCharBCM} it was shown that if $(R, \fram)$ is local of dimension $d$ and one has $B_R$ and $B_S$ as in the diagram above, then the injectivity of $H^d_{\fram}(R) \to H^d_{\fram}(B_R)$ implies the injectivity of $H^d_{\fram}(R) \to H^d_{\fram}(\myR\Gamma(Y, \cO_Y)) = H^d_{\pi^{-1}\fram}(Y, \cO_Y)$.  Hence, we recover this result and also another proof of the fact that weakly BCM-regular rings\footnote{Meaning that $R \to B$ is pure for every BCM $R$-algebra $B$.} are birational derived splinters (for relevant background see \cite{KovacsRat,MaSchwedeSingularitiesMixedCharBCM,Lyu.BirationalDerivedSplinters,MaMcDonaldRGSchwede.BSProperty}).  We emphasize, however, that some aspects of the proof are similar.  When combining this theorem with \cite{MaMcDonaldRGSchwede.BSProperty}, this also recovers the main result of \cite{RodriguezVillalobosSchwede.BrianconSkodaProperty}.  Again though, some techniques from the proof are similar.

\subsection{AI Acknowledgements}
Some key aspects of this paper were suggested by OpenAI's ChatGPT (GPT-5.5 Pro).  In particular, the LLM contributions were large enough that it would have warranted co-authorship had the same suggestions been made by a human.  Specifically, a special case of \autoref{thm.LocalCohomMapIsZero}, the case when $I$ was $\mathfrak{m}$-primary, was found by ChatGPT 5.5 Pro when prompted to look for a proof of the main theorem via a Sancho de Salas argument.  The generalization to the full statement of \autoref{thm.LocalCohomMapIsZero} was identified by the human author, who noted that it might be sufficient to prove the general theorem.

Completing the proof of the desired theorem was then posed to the LLM, which initially proposed several incorrect ways to proceed (notably, it asserted several times that a map in the derived category of a non-affine scheme being zero at stalks implies it is zero).  After some back-and-forth, the LLM suggested a version of a Cousin filtration argument, a variant of which is found below.  Even within that argument, the LLM also made other errors.  For instance, the LLM implicitly asserted \autoref{clm.InjectiveMapOnDR.lem.MainTechnicalLemma} for any $L^{\mydot}$, without any hypothesis on the dimension of the supports of its cohomology.  However, these errors were caught by the human author, and fixed, frequently in a collaborative manner with the LLM.  

We also note that for many of the arguments below, the LLM suggested different approaches than what is found below.  The human author preferred their own arguments.  The final arguments and all writing in this paper were entirely produced by the human author based upon their own understanding and verification of the result.  

OpenAI's ChatGPT (GPT-5.5 Pro) and Anthropic's Claude Fable 5 have also read the text below for accuracy and grammar.  Some of the suggestions were implemented.

\subsection{Acknowledgements}

The author thanks Bhargav Bhatt, Srikanth Iyengar, Linquan Ma, and Sandra Rodr\'iguez-Villalobos for valuable conversations and comments.  We particularly thank Bhargav Bhatt for allowing the inclusion of the alternate proof of \autoref{lem.MainTechnicalLemma} and thank Linquan Ma for pointing out a way to bypass \autoref{lem.MainTechnicalLemma} entirely in some cases.  The author was supported by NSF Grant DMS-2501903 and Simons Foundation Travel Support for Mathematicians SFI-MPS-TSM-00013051.

\section{Preliminaries}

We begin with a brief recap of the theory of big Cohen-Macaulay algebras.

\subsection{Big Cohen-Macaulay modules and algebras}

We recall several variants of the big Cohen-Macaulay condition.

\begin{definition}[Balanced big Cohen-Macaulay modules]
    Suppose $(R, \fram)$ is a Noetherian local ring.  An $R$-module $M$ is called \emph{weakly balanced big Cohen-Macaulay} if every system of parameters of $R$ is a weakly regular sequence\footnote{meaning that $x_{i+1}$ is a non-zerodivisor on $M/(x_1, \dots, x_i)M$} on $M$.
    We say it is \emph{balanced big Cohen-Macaulay} if it is weakly balanced big Cohen-Macaulay and $\fram M \neq M$.  
\end{definition}

We recall that both weakly balanced and balanced big Cohen-Macaulay $R$-modules behave well under localization.

\begin{lemma}[{\cite{Takeuchi.LocalizationsOfBalancedBCM}, \cite[Corollary 2.5]{Zarzuela.BalancedBCMAndFlat}, \cite{Sharp.CMPropertiesForBalancedBigCM}}]
    \label{lem.WeaklyBCMLocalizes}
    Suppose $(R, \fram)$ is a Noetherian universally catenary and equidimensional local ring.  If $M$ is a (weakly) balanced big Cohen-Macaulay $R$-module, then $M_Q$ is a (weakly) balanced big Cohen-Macaulay $R_Q$-module for every $Q \in \Spec R$ such that $M_Q \neq 0$.
\end{lemma}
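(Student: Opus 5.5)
The plan is to reduce to the characterization of (weakly) balanced big Cohen-Macaulay modules via local cohomology. I would avoid the sequential definition, which does not localize directly because a system of parameters of $R_Q$ need not come from part of a system of parameters of $R$ in a usable order. The standard fact I would use (Sharp) is that, over a Noetherian local ring, $M$ is weakly balanced big Cohen-Macaulay exactly when $H^i_{(x_1,\dots,x_k)}(M) = 0$ for $i<k$ for every part of a system of parameters $x_1,\dots,x_k$. Equivalently, the Koszul homology $H_i(x_1,\dots,x_k;M)=0$ for $i>0$. This formulation is insensitive to ordering and behaves well under localization.

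\textbf{Lifting parameters.} Fix $Q$ with $M_Q\neq 0$, let $h=\operatorname{ht} Q$, and let $y_1,\dots,y_h$ be a system of parameters of $R_Q$. After clearing denominators by units, I may take $y_i\in R$. The first step is to show that $\operatorname{ht}(y_1,\dots,y_h)R = h$ locally at $Q$, in the sense that $Q$ is minimal over $(y_1,\dots,y_h)$. Using prime avoidance, I would then choose $z_1,\dots,z_h\in R$ with $(z)R_Q=(y)R_Q$ up to radical such that $z_1,\dots,z_h$ is part of a system of parameters of $R$. Here equidimensionality and universal catenarity are needed: they give $\operatorname{ht} P + \dim R/P = \dim R$ for all primes $P$ (Ratliff). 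One constructs $z_i$ inductively, avoiding the minimal primes of $(z_1,\dots,z_{i-1})$ of height $i-1$ that are not contained in $Q$, and arranging $\dim R/(z_1,\dots,z_i)=\dim R-i$. The main obstacle is exactly this step: ensuring the lifted elements are simultaneously parameters of $R$ and generate, up to radical, the same ideal in $R_Q$. Without the catenary/equidimensional hypotheses this fails.

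\textbf{Vanishing and nondegeneracy.} Once the lift is in hand, $H^i_{(y)R_Q}(M_Q)=H^i_{(z)}(M)_Q=0$ for $i<h$, since local cohomology depends only on the radical and commutes with localization. Hence every system of parameters of $R_Q$ is Koszul-regular on $M_Q$, and $M_Q$ is weakly balanced big Cohen-Macaulay.

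For the balanced case, I still need $QM_Q\neq M_Q$. I would show that weakly balanced implies $H^h_{(z)}(M)\neq 0$ whenever $M_Q\neq 0$. The argument uses $\fram M\neq M$ and $H^d_\fram(M)\neq 0$ together with the composite spectral sequence/long exact sequences extending $z$ to a full system of parameters $z,w$ of $R$. This yields that $M_Q/(y)M_Q\neq 0$, hence by Nakayama-type reasoning over the Koszul complex $QM_Q\neq M_Q$. Concretely, if $M_Q=QM_Q$ then $M_Q/(y)M_Q$ would be killed by a power of $Q$ and equal to $Q$ times itself, forcing $H^h_{(y)}(M_Q)=0$. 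Since $M_Q$ is Cohen-Macaulay with respect to $(y)$, this forces $M_Q=0$, a contradiction.
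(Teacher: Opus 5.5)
\textbf{Weakly balanced case.} This part has a gap. The criterion you quote requires vanishing for every \emph{part} of a system of parameters of $R_Q$. You only handle full systems $y_1,\dots,y_h$, and you lift them only up to the radical of the whole ideal. Every such $(y)R_Q$ is $QR_Q$-primary, so what you actually prove is the single condition $H^i_{QR_Q}(M_Q)=0$ for $i<h$. For non-finitely generated modules this does not imply weak balancedness. For example, let $(A,\mathfrak{n})$ be a $2$-dimensional regular local ring and $P$ a height one prime. Then $H^i_{\mathfrak n}(\kappa(P))=0$ for all $i$, so every full system of parameters is Koszul-regular on $\kappa(P)$. Yet a system of parameters whose first element lies in $P$ is not weakly regular on it.

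The missing ingredient is an \emph{ordered} lift: $z_1,\dots,z_h$ part of a system of parameters of $R$ with $(z_1,\dots,z_i)R_Q=(y_1,\dots,y_i)R_Q$ for every $i$. This is exactly the lemma of Zarzuela that the paper invokes. With it the sequential definition localizes directly: $z_{i+1}$ is a nonzerodivisor on $M_Q/(y_1,\dots,y_i)M_Q$ and agrees with $y_{i+1}$ up to a unit modulo $(y_1,\dots,y_i)R_Q$. So your stated reason for avoiding that definition is unfounded, and the local cohomology detour is unnecessary. Alternatively, running your computation at every prime $Q'\subseteq Q$ shows that $M_Q$ is weakly cohomologically Cohen--Macaulay over $R_Q$. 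Bhatt's converse, the lemma following that definition in the paper, would then finish, but you have to actually make that step.

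\textbf{Balanced case.} This argument fails, and cannot be repaired as stated. Its final step says that $H^h_{(y)}(M_Q)=0$ together with Cohen--Macaulayness with respect to $(y)$ forces $M_Q=0$. That is false: together these only say $H^{\bullet}_{QR_Q}(M_Q)=0$, which holds for every module on which some element of $Q$ acts invertibly. That argument never uses $\fram M\neq M$, and without it the conclusion already fails for $M=R_x$ with $R$ Cohen--Macaulay, $x\in\fram$ a parameter, and $Q=\fram$.

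Your other route, from $H^h_{(z)}(M)\neq 0$ to $M_Q/(y)M_Q\neq 0$, is only asserted. In fact the balanced conclusion is false under the sole hypothesis $M_Q\neq 0$. Take $R=k[[a,b,c]]/(ab)$ with $k$ a field, which is complete, reduced and equidimensional. Let $M=R/(a)\oplus (R/(b))_c$ and $Q=(b,c)$. The first summand is maximal Cohen--Macaulay and the second is a localization of one, so every system of parameters of $R$ is weakly $M$-regular and $\fram M\neq M$; thus $M$ is balanced big Cohen--Macaulay. But $(R/(a))_Q=0$ and $M_Q\cong\operatorname{Frac}(k[[a,c]])\neq 0$, with $QM_Q=cM_Q=M_Q$. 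Here $H^1_{(c)}(M)\neq 0$ while its localization at $Q$ vanishes, which is exactly where your sketch breaks.

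So the balanced half needs an extra hypothesis on $Q$, such as $QM_Q\neq M_Q$, after which it follows immediately from the weak case. The paper itself only proves, and later only uses, the weakly balanced half.
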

\begin{proof}
    We sketch the case that $M$ is \emph{weakly} balanced big Cohen-Macaulay as that case is not stated in the literature (although of course, the proof is easier).  For any system of parameters $({x_1}, \dots, {x_r})$ for $R_Q$, we can find $(y_1, \dots, y_r)$, part of a system of parameters for $R$, such that $(y_1, \dots, y_i)R_Q = (x_1, \dots, x_i)R_Q$ for all $i$, see \cite[Lemma 2.2]{Zarzuela.BCMAndLocalization}.

    By hypothesis $y_{i+1}$ is not a zero divisor on $M/(y_1, \dots, y_i) M$ and hence also not a zero divisor on the localization.  By assumption $y_{i+1} R_Q/(y_1, \dots, y_i)R_Q = x_{i+1} R_Q/(x_1, \dots, x_i)R_Q$ and so multiplication by $x_{i+1}$ and $y_{i+1}$ agree up to a unit of $R_Q$ on the localized quotient, and so the result follows.
\end{proof}

We also have the related notion detected by local cohomology.

\begin{definition}
    Suppose $R$ is a Noetherian ring.  An $R$-module $M$ is called \emph{weakly cohomologically Cohen-Macaulay} if for each prime ideal $Q \in \Spec R$ we have that $H^i_Q(M_Q) = 0$ for all $i < \dim R_Q$.   
\end{definition}

Assuming $R$ is local, catenary and equidimensional, it is not difficult to see that weakly balanced big Cohen-Macaulay modules are weakly cohomologically Cohen-Macaulay (as they localize and the argument from the Noetherian case applies).  Bhatt proved the converse.

\begin{lemma}[{\cite[Corollary 2.8]{BhattAbsoluteIntegralClosure}}]
    Suppose $R$ is a Noetherian catenary and equidimensional local ring.  Suppose $M$ is an $R$-module.  If $M$ is weakly cohomologically Cohen-Macaulay then $M$ is weakly balanced big Cohen-Macaulay.  
    
    As a consequence, if $R$ is not necessarily local but is Noetherian, catenary and locally equidimensional, then $M$ is weakly cohomologically Cohen-Macaulay if and only if $M_Q$ is a weakly balanced big Cohen-Macaulay $R_Q$-module for all $Q \in \Spec R$.  
\end{lemma}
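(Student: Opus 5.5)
We prove that a weakly cohomologically Cohen-Macaulay module $M$ over a catenary, equidimensional, local $(R,\fram)$ is weakly balanced big Cohen-Macaulay by induction on $d = \dim R$. The case $d = 0$ is vacuous. Fix a system of parameters $x_1, \dots, x_d$. We must show that $x_{i+1}$ is a nonzerodivisor on $M/(x_1,\dots,x_i)M$. The natural route is to pass to $\overline{R} = R/x_1R$ and $\overline{M} = M/x_1M$, and to check two things:
\begin{enumerate}
\item $x_1$ is a nonzerodivisor on $M$;
\item $\overline{M}$ is weakly cohomologically Cohen-Macaulay over $\overline{R}$.
\end{enumerate}
For the induction to apply we also need $\overline{R}$ to be catenary and equidimensional of dimension $d-1$. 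The ring $R/x_1R$ need not be equidimensional, so we cannot simply pass to it. Instead we replace $\overline{R}$ by the quotient $R/\fra$, where $\fra$ is the intersection of the primary components of $x_1R$ of dimension $d-1$. Then $x_2,\dots,x_d$ is still a system of parameters of $R/\fra$. It is acceptable to replace $M/x_1M$ by its image over $R/\fra$ because the local cohomology conditions only involve primes containing $x_1$, and the minimal primes of $x_1 R$ of smaller dimension are handled by the catenary hypothesis.

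For step (1), let $N = \ker(x_1 \colon M \to M)$. Suppose $N \neq 0$ and choose $Q \in \Ass_R(N)$, or more generally a prime minimal in the support of some nonzero element of $N$. Then $H^0_Q(M_Q) \supseteq H^0_{Q}(N_Q) \neq 0$. The hypothesis gives $H^0_Q(M_Q) = 0$ whenever $\dim R_Q > 0$, so $Q$ must be a minimal prime. But $x_1$ lies in no minimal prime, since $R$ is equidimensional and $x_1$ is a parameter, so $x_1$ is a unit in $R_Q$ and $N_Q = 0$. Since $M$ is not finitely generated, we argue with elements: for $0 \neq n \in N$, take $Q$ minimal over $\operatorname{ann}(n)$. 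Then $n/1 \in H^0_Q(M_Q)$ is nonzero, which is a contradiction.

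For step (2), fix a prime $Q \supseteq x_1R$. The short exact sequence $0 \to M_Q \xrightarrow{x_1} M_Q \to \overline{M}_Q \to 0$ gives the long exact sequence
\[
H^i_Q(M_Q) \to H^i_Q(\overline{M}_Q) \to H^{i+1}_Q(M_Q).
\]
Both outer terms vanish for $i + 1 < \dim R_Q$. Since $R$ is catenary and equidimensional, $\dim (R/\fra)_Q = \dim R_Q - 1$, so $H^i_Q(\overline{M}_Q) = 0$ for $i < \dim(R/\fra)_Q$. Local cohomology can be computed over either ring, since it depends only on the radical of $Q$ acting on the module. Induction then shows that $x_2,\dots,x_d$ is weakly regular on $\overline{M}$. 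We also need $\overline{R}$ to stay catenary and equidimensional at every stage; for quotients by a parameter this is where the catenary hypothesis is used.

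The main obstacle is this bookkeeping: identifying $\dim \overline{R}_Q$ with $\dim R_Q - 1$ for all relevant $Q$, and handling the lower-dimensional components of $V(x_1)$. Those components could carry primes $Q$ with small $\dim(R/x_1)_Q$ but larger $\dim R_Q - 1$. For such primes the estimate above actually gives more vanishing than needed, so I expect it to work out. The consequence in the non-local case follows by applying the local statement to each $R_Q$, using that localization preserves catenarity and local equidimensionality, together with the fact from the preceding remark that weakly balanced modules are weakly cohomologically Cohen-Macaulay.
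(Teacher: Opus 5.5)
\textbf{There is a gap in the induction step.} Your Step (1) is correct: for $0\neq n$ with $x_1n=0$, a prime $Q$ minimal over $\operatorname{ann}(n)$ gives $0\neq n/1\in H^0_Q(M_Q)$, which forces $Q$ to be minimal and contradicts $x_1\in Q$. The long exact sequence in Step (2) is also correct, and the non-local consequence follows as you indicate. (The paper gives no proof of its own here; it simply cites Bhatt.) The problem is the detour through $R/\mathfrak{a}$, which is both unnecessary and broken as written.

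The module $M/x_1M$ is a module over $R/x_1R$, not over $R/\mathfrak{a}$. Nothing you prove gives $\mathfrak{a}M\subseteq x_1M$, so ``its image over $R/\mathfrak{a}$'' has no meaning. Consequently the induction hypothesis, which concerns modules over the catenary equidimensional ring $R/\mathfrak{a}$, cannot be applied to $\overline M$. Using $M/\mathfrak{a}M$ instead does not help. Your long exact sequence comes from $0\to M\xrightarrow{x_1}M\to M/x_1M\to 0$, which has no analogue for $M/\mathfrak{a}M$. And weak regularity of $x_2,\dots,x_d$ on $M/\mathfrak{a}M$ says nothing about $M/x_1M$. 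Your final paragraph admits this bookkeeping is unresolved, and the worry about ``lower-dimensional components of $V(x_1)$'' is exactly what is left open.

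\textbf{The fix:} the premise of the detour is false. Under the hypotheses, $R/x_1R$ \emph{is} equidimensional.
\begin{itemize}
\item In a catenary equidimensional local ring of dimension $d$, one has $\mathrm{ht}\,P+\dim R/P=d$ for every prime $P$. To see this, apply catenarity in the local domain $R/\mathfrak{p}$ for each minimal $\mathfrak{p}\subseteq P$ (each has dimension $d$), then take the maximum over such $\mathfrak{p}$.
\item Since $x_1$ lies in no minimal prime of $R$, Krull's principal ideal theorem gives every minimal prime $P$ of $x_1R$ height exactly $1$. Hence $\dim R/P=d-1$.
\item So $\overline R=R/x_1R$ is local, catenary and equidimensional of dimension $d-1$, and $x_2,\dots,x_d$ is a system of parameters for it.
\item For every $Q\supseteq x_1R$, the element $x_1$ avoids the minimal primes of $R_Q$, so $\dim\overline R_Q=\dim R_Q-1$. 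The lower bound is Krull. For the upper bound, any chain of primes containing $x_1$ extends downward by a minimal prime. No catenarity is needed for this step.
\end{itemize}
Thus there are no lower-dimensional components to handle. Taking $\overline R=R/x_1R$ directly, your Steps (1) and (2) give exactly the hypotheses needed to apply induction to $\overline M$ over $\overline R$, and the argument is complete.
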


\begin{definition}
    We have a \emph{weak CM-assignment} on a class of catenary locally equidimensional Noetherian rings with specified permitted morphisms if the following conditions are satisfied.
    \begin{enumerate}
        \item for every Noetherian ring $R$ in that class, there exists a weakly cohomologically Cohen-Macaulay $R$-algebra $B_R$, and
        \item for every map of rings $S \to R$ in that class (for us it will suffice to consider surjections) there exists a commutative diagram of maps of rings
        \[
            \xymatrix{
                S \ar[d] \ar[r] & R \ar[d] \\
                B_S \ar[r] & B_R.
            }
        \]
    \end{enumerate} 
\end{definition}

\subsection{Cousin complexes}
\label{subsec.CousinComplexes}
    We recall the following work on Cousin complexes and their relations with balanced big Cohen-Macaulay modules.  For more detailed background on Cousin complexes, we refer the reader to \cite[Chapter IV, Section 2]{HartshorneResidues} and \cite{Sharp.CousinForNoetherian, Sharp.CousinComplexBCM,Sharp.LocalCohomologyCousinComplex}.

    Suppose that $R$ is a Noetherian universally catenary locally equidimensional ring of finite dimension $n$ and that $M$ is an $R$-module.   
    We have the following filtration $D_{\bullet}$ of $\Spec R$
    \[
        D_i := \{ \frq \in \Spec R \;|\; \mathrm{ht}\, \frq \geq i \}.
    \]
    That is, $D_0 = \Spec R$, $D_{n+1} = \emptyset$, and if $R$ is local with maximal ideal $\fram$ then $D_n = \{ \fram \}$.
    Furthermore, when $R$ is local our hypotheses imply $R$ is catenary and equidimensional and so we see that $D_i$ can also be written as $\{ \frq \in \Spec R \;|\; \dim R/\frq \leq \dim R - i \}$.  In particular, in the local equidimensional catenary case, the height filtration is also the dimension filtration and so the associated Cousin complex which we construct below can be viewed as based on either filtration.

    To each $i$, we also associate $\partial D_i = D_i \setminus D_{i+1}$.  In particular, $\partial D_0$ is the minimal primes of $R$.

    Associated to this filtration, we have the following \emph{augmented Cousin complex} $C^{\mydot}_M := \mathcal{C}(D_{\bullet}, M)$ for any module $M$.
    \[
        0 \to C^{-1} \xrightarrow{d^{-1}} C^0 \xrightarrow{d^0} C^1 \xrightarrow{d^1} \dots \xrightarrow{d^{n-1}} C^n \to 0
    \]
    where $C^{-1} = M$ and $C^0 := \bigoplus_{\frq \in \partial D_0} M_{\frq}$ and the map $C^{-1} \to C^0$ is simply the obvious localization on each component.
    
    Then in general, for $i \geq 1$, we define 
    \[
        C^i_M = C^i := \bigoplus_{\frq \in \partial D_i} (\coker d^{i-2})_{\frq} =: \bigoplus_{\frq \in \partial D_i} C^i(\frq)
    \]
    where the map $C^{i-1} \to C^i$ is induced by the map $C^{i-1} \to (\coker d^{i-2})_{\frq}$ defined by sending $x \in C^{i-1}$ to  
    \[
        \overline{x}/1 \in (\coker d^{i-2})_{\frq} = \Big({C^{i-1} \over d^{i-2}(C^{i-2})}\Big)_{\frq}
    \]
    for each $\frq \in \partial D_i$.  

    The \emph{non-augmented Cousin complex}, which we denote by $\mathcal{C}'(D_{\bullet}, M)$, is then the stupid (also called brutal) truncation
    \[
        \sigma_{\geq 0} \mathcal{C}(D_{\bullet}, M) = \big(0 \to C^0 \to C^1 \to \dots \to C^n \to 0\big)
    \]
    and so we have a canonical map $M \to \mathcal{C}'(D_{\bullet}, M)$.

    We recall the following facts that we will need later.
    \begin{lemma}
        \label{lem.CousinBasics}
        Using the notation above, and still assuming $R$ is locally equidimensional and universally catenary and still using the height filtration $D_{\bullet}$, we have the following.
        \begin{enumerate}
            \item If $(R, \fram)$ is additionally local and $\dim \Supp M = n$, then $H^n_{\fram}(M)$ is naturally isomorphic to the last term of the Cousin complex, $C^n$.  Furthermore, $\myR\Gamma_{\fram}(C^i) = 0$ for all $i < n$. (\cite[Theorem 1.8]{Sharp.CousinComplexBCM}, \cite[Theorem]{Sharp.LocalCohomologyCousinComplex})  \label{lem.CousinBasics.TopIsLocalCohom}  
            \item If $\frq \in \Spec R$  then $(C^{\mydot}_M)_{\frq} = C^{\mydot}_{M_{\frq}}$. (see \cite[Theorem 3.5]{Sharp.CousinForNoetherian})  \label{lem.CousinBasics.CommutingWithLocalization}  
            \item We have $\Supp (\coker d^{i-2}) \subseteq D_i$, and $C^i_{\frq} = 0$ if $\mathrm{ht}\, \frq < i$, or equivalently if $R$ is additionally local, if $\dim R / \frq > n- i$.  (\cite[Proposition 2.5(iii), (2.7)]{Sharp.CousinForNoetherian})     \label{lem.CousinBasics.SuppOfCousinPiece}
        \end{enumerate}
    \end{lemma}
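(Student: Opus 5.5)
The three parts of \autoref{lem.CousinBasics} are classical facts from Sharp's work, so the plan is to reduce each to the structure of the construction rather than reprove everything from scratch. The order we would take is: first (\ref{lem.CousinBasics.SuppOfCousinPiece}), since it is a formal consequence of the inductive definition; then (\ref{lem.CousinBasics.CommutingWithLocalization}), which uses the support statement; and finally (\ref{lem.CousinBasics.TopIsLocalCohom}), which uses both.

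For (\ref{lem.CousinBasics.SuppOfCousinPiece}) we argue by induction on $i$. For $i=0$ there is nothing to prove. Suppose $\Supp(\coker d^{i-2}) \subseteq D_i$. Take $\frq$ of height exactly $i$. Then $\frq$ is minimal in $D_i$, so it is minimal in the support of $\coker d^{i-2}$ (if it lies in the support at all). Also, $(C^i)_\frq = \bigoplus_{\frq' \in \partial D_i} C^i(\frq')_\frq$ reduces to the single summand $(\coker d^{i-2})_\frq$, because every other $\frq'$ of height $i$ satisfies $\frq' \not\subseteq \frq$, and elements of $C^i(\frq')$ are killed by powers of elements of $\frq'$. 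Hence $(\coker d^{i-2})_\frq \to (C^i)_\frq$ is an isomorphism, so $(\coker d^{i-1})_\frq = 0$. For primes of height $<i$, both $(\coker d^{i-2})_\frq$ and $(C^i)_\frq$ vanish. This shows $\Supp(\coker d^{i-1}) \subseteq D_{i+1}$ and $C^i_\frq = 0$ for $\mathrm{ht}\,\frq<i$. In the local universally catenary equidimensional case, $\mathrm{ht}\,\frq + \dim R/\frq = n$ translates this into the dimension condition.

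For (\ref{lem.CousinBasics.CommutingWithLocalization}) we again induct on $i$. Localization is exact and commutes with direct sums and with further localization. The primes of $R_\frq$ of height $i$ are exactly the $\frq'R_\frq$ with $\frq' \subseteq \frq$ of height $i$; here heights are unchanged because $\mathrm{ht}(\frq' R_\frq) = \mathrm{ht}\,\frq'$. The summands indexed by $\frq' \not\subseteq \frq$ die after localizing at $\frq$, by the same annihilation argument as above. Therefore $(C^i_M)_\frq$ and $C^i_{M_\frq}$ are built from the same pieces with the same differentials.

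The main obstacle is (\ref{lem.CousinBasics.TopIsLocalCohom}). Our first step would be to show that each $C^i(\frq)$ is $\frq$-torsion and is a module over $R_\frq$. If $i<n$, then $\frq \neq \fram$, so some element of $\fram$ acts invertibly on $C^i(\frq)$, and hence $\myR\Gamma_\fram(C^i(\frq))=0$. Since $\myR\Gamma_\fram$ commutes with direct sums (it is computed by a \v{C}ech complex), this gives $\myR\Gamma_\fram(C^i)=0$ for $i<n$. Next, the truncation $\sigma_{\ge 0}$ gives an exact triangle $\mathcal{C}'[-1]\to \cdots$ relating $M$ and the augmented complex. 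Applying $\myR\Gamma_\fram$ kills every term of $\mathcal{C}'$ except $C^n$, and $C^n$ is already $\fram$-torsion. The difficulty is that the augmented Cousin complex need not be exact for general $M$, so this does not immediately compute $H^n_\fram(M)$. The hypercohomology spectral sequence expresses $\myR\Gamma_\fram(M)$ in terms of the cohomology of the augmented complex. Each $H^j$ of that complex has support of dimension $\le n-j-2$, so its local cohomology vanishes in the degrees that could contribute to total degree $n$. This leaves $H^n_\fram(M) \cong C^n$, provided we also check that the term $\coker d^{n-2}$ gives $C^n = (\coker d^{n-2})_\fram$. We would rely on Grothendieck vanishing and the support bounds from (\ref{lem.CousinBasics.SuppOfCousinPiece}) for this bookkeeping, and cite \cite{Sharp.LocalCohomologyCousinComplex} for the final identification if the degree bookkeeping becomes delicate.
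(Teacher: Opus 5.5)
Your proposal is correct. The paper itself gives no argument for this lemma; each part is quoted from Sharp. So what you have written is a self-contained replacement for those citations rather than a variant of a proof in the text.

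Your route works:
\begin{itemize}
\item prove (3) by induction directly from the construction;
\item get (2) from the same annihilation bookkeeping;
\item get (1) from three ingredients: the triangle $M \to \mathcal{C}' \to \mathcal{C} \xrightarrow{+1}$, the computation $\myR\Gamma_{\fram}(\mathcal{C}') \simeq C^n[-n]$, and the spectral sequence $H^p_{\fram}(H^q(\mathcal{C})) \Rightarrow H^{p+q}(\myR\Gamma_{\fram}(\mathcal{C}))$.
\end{itemize}

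Two steps in (1) should be stated explicitly.

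\textbf{The sharper support bound.} The bound $\dim \Supp H^q(\mathcal{C}) \leq n-q-2$ is stronger than what (3) gives directly. The inclusion $\Supp \coker d^{q-1} \subseteq D_{q+1}$ only yields $n-q-1$. The sharper bound comes from your own inductive step. At a prime $\frq$ of height $q+1$, the map $(\coker d^{q-1})_{\frq} \to (C^{q+1})_{\frq}$ is an isomorphism. Hence its kernel $H^q(\mathcal{C})$ also vanishes at $\frq$, so $\Supp H^q(\mathcal{C}) \subseteq D_{q+2}$.

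\textbf{Vanishing in two degrees.} You need $H^{k}(\myR\Gamma_{\fram}(\mathcal{C})) = 0$ for $k=n-1$ as well as $k=n$, not just in ``total degree $n$''. Degree $n-1$ gives injectivity of $H^n_{\fram}(M) \to C^n$, and degree $n$ gives surjectivity. The sharper bound, together with Grothendieck vanishing (valid for arbitrary modules), gives exactly $E_2^{p,q}=0$ for $p+q \geq n-1$. This is why the sharper bound is essential.

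\textbf{What each route buys.} Your approach gives transparency:
\begin{itemize}
\item it isolates the support property that drives (1);
\item it exhibits the isomorphism as the one induced by $M \to \mathcal{C}'$, which is compatible with how it is used in the proof of \autoref{lem.MainTechnicalLemma};
\item it shows the hypothesis $\dim \Supp M = n$ is not actually needed. If $\dim \Supp M < n$, then $M$ vanishes at every minimal prime, so all $C^i$ with $i \geq 0$ are zero, and $H^n_{\fram}(M)=0$ as well.
\end{itemize}
The citation route is simply shorter.
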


    Sharp characterized weakly balanced big Cohen-Macaulay modules via Cousin complexes.

    \begin{theorem}[{\cite[Theorem 3.6]{Sharp.CousinComplexBCM}}]
        \label{thm.SharpViaCousin}
        Suppose $(R, \fram)$ is a Noetherian local equidimensional and catenary ring and $M$ is an $R$-module.  
        Then $M$ is a weakly balanced big Cohen-Macaulay module if and only if the augmented Cousin complex  $\mathcal{C}(D_{\bullet}, M)$ is exact.  That is, if and only if 
        \[
            M \to \mathcal{C}'(D_{\bullet}, M)
        \]
        is an isomorphism in the derived category.
    \end{theorem}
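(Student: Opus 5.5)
We would prove the two implications separately, and in both directions work with the intermediate notion of being weakly cohomologically Cohen--Macaulay. Write $K^i := \coker d^{i-2}$, so that $K^0 = M$ and $K^1 = \coker(M \to C^0)$. By construction, $d^{i-1}$ factors as $C^{i-1} \twoheadrightarrow K^i \to C^i$. Exactness of the augmented complex is then equivalent to injectivity of $K^i \to C^i$ for every $0 \le i \le n$, together with $K^{n+1}=0$.

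One elementary observation does most of the work. Each summand $C^i(\frp) = (K^i)_{\frp}$ of $C^i$ is an $R_{\frp}$-module. Hence, for any prime $\frq \neq \frp$ with $\frp \subseteq \frq$, an element $x \in \frq \setminus \frp$ acts invertibly on it, and so $\myR\Gamma_{\frq}\big(C^i(\frp)_{\frq}\big) = 0$. When $\frp \not\subseteq \frq$ the localization at $\frq$ is already zero. Since $\mathrm{ht}\,\frp = i$, it follows that $\myR\Gamma_{\frq}\big((C^i)_{\frq}\big) = 0$ whenever $\mathrm{ht}\,\frq > i$. This is the local form of \autoref{lem.CousinBasics}(\ref{lem.CousinBasics.TopIsLocalCohom}), and it needs no hypothesis on $\dim \Supp M$.

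\emph{($\Rightarrow$).} Suppose $M$ is weakly balanced big Cohen--Macaulay.
\begin{enumerate}
    \item By \autoref{lem.WeaklyBCMLocalizes}, each nonzero $M_{\frq}$ is weakly balanced big Cohen--Macaulay over $R_{\frq}$. Because $R$ is catenary and equidimensional, $\dim R_{\frq} = \mathrm{ht}\,\frq$, so a system of parameters of $R_{\frq}$ has length $\mathrm{ht}\,\frq$.
    \item Such a weakly regular sequence of length $\mathrm{ht}\,\frq$ on $M_{\frq}$ gives $H^j_{\frq}(M_{\frq})=0$ for $j<\mathrm{ht}\,\frq$. This is the standard Koszul/\v{C}ech argument, which uses only that each element is a nonzerodivisor on the successive quotients and not finite generation.
    \item By induction on $i$, we prove the following claim $(\ast)_i$: $H^j_{\frq}\big((K^i)_{\frq}\big)=0$ for all $j<\mathrm{ht}\,\frq - i$ and all $\frq$, and $\Supp K^i \subseteq D_i$. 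The case $i=0$ is the previous step.
    \item Injectivity of $K^i \to C^i$. Suppose $0 \neq y \in K^i$ maps to $0$. Then $y/1 = 0$ in $(K^i)_{\frp}$ for every $\frp$ of height exactly $i$, so every minimal prime $\frq$ of $\Supp(Ry)$ has height $\geq i+1$. The image $y/1 \in (K^i)_{\frq}$ is nonzero and killed by a power of $\frq$, so it gives a nonzero element of $H^0_{\frq}\big((K^i)_{\frq}\big)$. This contradicts $(\ast)_i$, since $0 < \mathrm{ht}\,\frq - i$.
    \item The inductive step. Injectivity gives a short exact sequence $0 \to K^i \to C^i \to K^{i+1} \to 0$. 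Localize at $\frq$ with $\mathrm{ht}\,\frq > i$ and use the observation above to get $H^j_{\frq}\big((K^{i+1})_{\frq}\big) \cong H^{j+1}_{\frq}\big((K^i)_{\frq}\big)$. This yields the bound in $(\ast)_{i+1}$. The support statement is \autoref{lem.CousinBasics}(\ref{lem.CousinBasics.SuppOfCousinPiece}).
    \item Finally $K^{n+1}$ is supported in $D_{n+1}=\emptyset$, so it vanishes, and the augmented complex is exact.
\end{enumerate}

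\emph{($\Leftarrow$).} Suppose the augmented Cousin complex is exact, so $M \simeq \mathcal{C}'(D_\bullet,M)$.
\begin{enumerate}
    \item Fix $\frq$. By \autoref{lem.CousinBasics}(\ref{lem.CousinBasics.CommutingWithLocalization}), $M_{\frq} \simeq \mathcal{C}'(D_\bullet,M)_{\frq}$, a complex concentrated in degrees $0,\dots,\mathrm{ht}\,\frq$, because $(C^i)_{\frq}=0$ for $i>\mathrm{ht}\,\frq$ by \autoref{lem.CousinBasics}(\ref{lem.CousinBasics.SuppOfCousinPiece}).
    \item Apply $\myR\Gamma_{\frq}$ and use the stupid filtration. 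Every term in degree $<\mathrm{ht}\,\frq$ is killed by $\myR\Gamma_{\frq}$ by the observation above. Hence $\myR\Gamma_{\frq}(M_{\frq})$ is concentrated in degree $\mathrm{ht}\,\frq = \dim R_{\frq}$.
    \item Thus $M$ is weakly cohomologically Cohen--Macaulay, and Bhatt's lemma (\cite[Corollary 2.8]{BhattAbsoluteIntegralClosure}) gives that $M$ is weakly balanced big Cohen--Macaulay.
\end{enumerate}

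The main obstacle is the bookkeeping in the ($\Rightarrow$) induction. One must check carefully that local cohomology and localization commute for non-finitely generated modules, which holds because $\Gamma_{\frq}$ is computed by a \v{C}ech complex. One must also check that heights computed in $R$ and in $R_{\frq}$ agree with the filtration, which is where the catenary and equidimensional hypotheses enter. We would verify the vanishing of $\myR\Gamma_{\frq}$ on the summands $C^i(\frp)$ first, since both directions rest on it.
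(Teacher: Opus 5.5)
Your argument is correct. The paper gives no proof of this statement; it is imported directly from \cite[Theorem 3.6]{Sharp.CousinComplexBCM}, so the comparison is with a citation rather than an argument.

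Your route factors the theorem through the local-cohomological notion. The core is the equivalence between exactness of the augmented Cousin complex and the vanishing $H^j_{\frq}(M_{\frq})=0$ for $j<\mathrm{ht}\,\frq$ at every $\frq$. Both directions are driven by the single observation $\myR\Gamma_{\frq}((C^i)_{\frq})=0$ for $\mathrm{ht}\,\frq>i$. The forward direction uses the induction on the cokernels $K^i$: the $H^0_{\frq}$ argument gives injectivity of $K^i\to C^i$, and the long exact sequence gives the degree shift. The backward direction uses the stupid filtration. Then \autoref{lem.WeaklyBCMLocalizes} with the Koszul argument, and \cite[Corollary 2.8]{BhattAbsoluteIntegralClosure}, translate to and from weakly balanced big Cohen--Macaulay.

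What this buys is transparency. The core equivalence is formal from the height filtration, so it yields \autoref{cor.SharpCousinCMForNonLocal} directly, whereas the paper only states that corollary as a rephrasing of Sharp. It also shows that locality and the catenary/equidimensional hypotheses enter only through those two dictionary results. They do not enter, as your closing paragraph says, in matching heights in $R$ and $R_{\frq}$, which is automatic; likewise $\dim R_{\frq}=\mathrm{ht}\,\frq$ in your step 1 is a tautology.

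The price is that your converse is a reduction to Bhatt's criterion, not a reconstruction of Sharp's argument, which predates it. There is no circularity, since that criterion has a short direct proof by induction along a system of parameters. Note also that \autoref{lem.WeaklyBCMLocalizes} is stated under universal catenarity; this is harmless under the standing hypotheses of \autoref{subsec.CousinComplexes}.

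One justification needs repair. In your key observation, the sentence ``when $\frp\not\subseteq\frq$ the localization at $\frq$ is already zero'' does not follow from $C^i(\frp)$ being an $R_{\frp}$-module. For a domain $R$ and incomparable primes, $R_{\frp}\otimes_R R_{\frq}\neq 0$. The claim is true because every element of $C^i(\frp)$ is killed by a power of $\frp$, which comes from $\Supp(\coker d^{i-2})\subseteq D_i$ in \autoref{lem.CousinBasics}(\ref{lem.CousinBasics.SuppOfCousinPiece}). Simpler still, no case split is needed: $\mathrm{ht}\,\frq>\mathrm{ht}\,\frp$ forces $\frq\not\subseteq\frp$, so some $x\in\frq\setminus\frp$ acts invertibly on $C^i(\frp)_{\frq}$ in every case. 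You should also record that $\myR\Gamma_{\frq}$ commutes with the possibly infinite direct sum defining $C^i$, which holds over a Noetherian ring.
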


    Rephrasing Sharp's result outside the local setting, we have the following.
    \begin{corollary}
        \label{cor.SharpCousinCMForNonLocal}
        Suppose $R$ is Noetherian, universally catenary and locally equidimensional of finite dimension.  Then $M$ is weakly cohomologically Cohen-Macaulay if and only if $M \to \mathcal{C}'(D_{\bullet}, M)$ is an isomorphism in the derived category  where $D_{\bullet}$ denotes the height filtration.
    \end{corollary}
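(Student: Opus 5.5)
The plan is to reduce to the local statement \autoref{thm.SharpViaCousin} by checking that both conditions in \autoref{cor.SharpCousinCMForNonLocal} can be tested prime by prime.

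First, consider the condition that $M$ is weakly cohomologically Cohen-Macaulay. By definition, and by the lemma of Bhatt recalled above (applied to the catenary equidimensional local rings $R_Q$), this holds if and only if $M_Q$ is a weakly balanced big Cohen-Macaulay $R_Q$-module for every $Q \in \Spec R$. Here I use that $R$ universally catenary and locally equidimensional implies each $R_Q$ is catenary and equidimensional.

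Second, consider the condition that $M \to \mathcal{C}'(D_\bullet, M)$ is an isomorphism in the derived category. This is the same as exactness of the augmented complex $\mathcal{C}(D_\bullet, M)$. Exactness of a complex of $R$-modules can be checked after localizing at every prime $Q$. By \autoref{lem.CousinBasics}(\ref{lem.CousinBasics.CommutingWithLocalization}), we have $\mathcal{C}(D_\bullet,M)_Q \cong \mathcal{C}(D_\bullet^{R_Q}, M_Q)$, compatibly with the augmentation $M_Q \to C^0_{M_Q}$. Here the key point is that the height filtration of $\Spec R$ restricts to the height filtration of $\Spec R_Q$, because for $\frq \subseteq Q$ we have $\mathrm{ht}\,\frq R_Q = \mathrm{ht}\,\frq$.

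Combining the two, I apply \autoref{thm.SharpViaCousin} to each local ring $R_Q$. It says that $M_Q$ is weakly balanced big Cohen-Macaulay exactly when $\mathcal{C}(D_\bullet^{R_Q}, M_Q)$ is exact. This chain of equivalences gives the corollary.

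The main point needing care is the compatibility of localization with the Cousin construction, including the augmentation and the fact that the filtrations match. This is where I invoke Sharp's Theorem 3.5 via \autoref{lem.CousinBasics}(\ref{lem.CousinBasics.CommutingWithLocalization}). The finite-dimensionality hypothesis only ensures the complex is bounded, so that it is a legitimate object of the bounded derived category. Beyond that, everything is formal.
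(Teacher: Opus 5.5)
Your proposal is correct and matches the paper's approach. The paper gives no separate proof and presents the corollary as a rephrasing of Sharp's local theorem; the intended justification is your reduction to the local rings $R_Q$, using compatibility of the Cousin complex with localization (\autoref{lem.CousinBasics}) and Bhatt's lemma to move between the cohomological and weakly balanced conditions.
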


\section{The BCM condition and Rees algebras}

Suppose $(R, \fram)$ is a reduced equidimensional universally catenary Noetherian local ring of dimension $n$, $I$ is an ideal of positive height and $S = R \oplus It \oplus I^2 t^2 \oplus \dots$ is the Rees algebra.  Set $Y = \Proj S$.  Fix $K^{\mydot}$ fitting into the triangle:
\[
    K^{\mydot} \to R \to \myR\Gamma(Y, \cO_Y) \xrightarrow{+1}.
\]
In other words $K^{\mydot}$ is the fiber of $R \to \myR\Gamma(Y, \cO_Y)$.  We claim 
that $\dim \Supp H^j(Y, \cO_Y) \leq n-j-1$ for $j > 0$.  Indeed, if $\dim \Supp H^j(Y, \cO_Y) \geq n-j$, then after localizing at a point in that support of height $j$, we may assume that $\dim R = n = j$.  But $H^n(Y, \cO_Y) = 0$ since the fiber dimension of the birational map $Y \to \Spec R$ is $\leq n-1$.

We then assert that
\begin{equation}
    \label{eq.DimSuppBoundsOnK}
    \dim \Supp \myH^j(K^{\mydot}) \leq n - j.
\end{equation}
To see this, note that if $j \geq 2$, then $\myH^j(K^{\mydot}) = H^{j-1}(Y, \cO_Y)$, and the latter has support of dimension at most $n - (j-1) - 1 = n-j$.  If $j = 1$, then as $Y \to \Spec R$ is birational, $R \hookrightarrow \Gamma(Y, \cO_Y)$ is also birational and hence $\myH^1(K^{\mydot}) = \coker(R \to \Gamma(Y, \cO_Y))$ has support of dimension $\leq n-1$.  Finally, we see that $\myH^0(K^{\mydot}) = \ker (R \to \Gamma(Y, \cO_Y)) = 0$ as $I$ has positive height and $R$ is reduced.    

Fix $\frn = \fram S + S_{> 0}$ to be the homogeneous maximal ideal of $S$.  Let  $\nu : S_{\frn} \to R$ denote the map induced by projection onto the degree zero term.
Suppose that $C$ is an $R$-module and we have a commutative diagram of $S$-modules where $B$ is an $S_{\frn}$-module such that $H^i_{\frn}(B) = 0$ for $i < \dim S_{\frn}$.
\[
    \xymatrix{
        S_{\frn} \ar[d] \ar[r]^{\nu} & R \ar[d] \\
        B \ar[r] & C.
    }
\]
We do not make any Cohen-Macaulay assumptions on $C$ \emph{yet}.

\begin{theorem}
    \label{thm.LocalCohomMapIsZero}
    With notation as above, the composition $\myR\Gamma_{\fram}(K^{\mydot}) \to \myR\Gamma_{\fram}(R) \to \myR\Gamma_{\fram}(C)$ is zero in $D(R)$.
\end{theorem}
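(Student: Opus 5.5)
The plan is a Sancho de Salas style argument: realize $K^{\mydot}$ as the degree-zero part of $\myR\Gamma_{S_+}(S)$. Then $\myR\Gamma_{\fram}(K^{\mydot})$ becomes a graded piece of $\myR\Gamma_{\frn}(S)$. The map to $\myR\Gamma_{\fram}(C)$ then factors through $\myR\Gamma_{\frn}(B)$, which is concentrated in the single degree $n+1$ and so maps to zero for degree reasons.

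\emph{Step 1: identify $K^{\mydot}$.} Let $U = \Spec S \setminus V(S_+)$. Since $Y = \Proj S$ is the quotient of $U$ by $\mathbb{G}_m$, taking degree-zero parts gives $\myR\Gamma(Y, \cO_Y) = \myR\Gamma(U, \cO_U)_0$. Taking degree-zero parts of the standard triangle
\[
\myR\Gamma_{S_+}(S) \to S \to \myR\Gamma(U, \cO_U) \xrightarrow{+1}
\]
therefore gives a triangle
\[
\myR\Gamma_{S_+}(S)_0 \to S_0 = R \to \myR\Gamma(Y, \cO_Y) \xrightarrow{+1}.
\]
Here the map $R \to \myR\Gamma(Y,\cO_Y)$ is the pullback. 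Hence $K^{\mydot} \cong \myR\Gamma_{S_+}(S)_0$, compatibly with the maps to $R$. Applying $\myR\Gamma_{\fram}$, which commutes with taking graded pieces because $\fram$ sits in degree $0$, gives
\[
\myR\Gamma_{\fram}(K^{\mydot}) \cong \myR\Gamma_{\fram}\myR\Gamma_{S_+}(S)_0 = \myR\Gamma_{\frn}(S)_0 = \myR\Gamma_{\frn}(S_{\frn})_0 .
\]
The map to $\myR\Gamma_{\fram}(R)$ is the degree-zero component of $\myR\Gamma_{\frn}(S) \to \myR\Gamma_{\frn}(S/S_+) = \myR\Gamma_{\fram}(R)$. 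I expect this step to be the main obstacle. One must check that the graded Čech or derived bookkeeping really produces the \emph{same} map $K^{\mydot}\to R$ as the fiber in the statement, not merely an abstractly isomorphic object. I would carry this out with explicit Čech complexes on generators of $S_+$ (the elements $at$ for generators $a$ of $I$) together with a system of parameters of $R$.

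\emph{Step 2: factor through $B$.} The $S$-module $C$ is killed by $S_+$, so $\myR\Gamma_{\frn}(C) = \myR\Gamma_{\fram}(C)$. In $D(S)$, the composite $\myR\Gamma_{\frn}(S_{\frn}) \to \myR\Gamma_{\frn}(R) \to \myR\Gamma_{\frn}(C)$ equals $\myR\Gamma_{\frn}$ applied to $S_{\frn}\to C$. By the commutative square, that map factors through $B$, so the composite factors through $\myR\Gamma_{\frn}(B)$.

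\emph{Step 3: the degree argument.} Since $\dim S_{\frn} = n+1$ and $H^i_{\frn}(B)=0$ for $i<n+1$, while local cohomology vanishes above the number of generators of $\frn$ up to radical, we get $\myR\Gamma_{\frn}(B) \cong H^{n+1}_{\frn}(B)[-(n+1)]$. On the other hand, $\myR\Gamma_{\fram}(C)$ is computed by a Čech complex on a system of parameters of $R$, so it is represented by a complex in degrees $[0,n]$. A map from a complex concentrated in degree $n+1$ to a complex concentrated in degrees $\le n$ is zero in the derived category, by the standard $t$-structure vanishing $\mathrm{Hom}(D^{\ge n+1}, D^{\le n})=0$. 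Hence $\myR\Gamma_{\frn}(S) \to \myR\Gamma_{\fram}(C)$ is zero in $D(S)$, and therefore also zero after restricting scalars to $R$.

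\emph{Step 4: conclude.} The degree-zero piece $\myR\Gamma_{\frn}(S)_0$ is a direct summand of $\myR\Gamma_{\frn}(S)$ as a complex of $R$-modules. So the composite
\[
\myR\Gamma_{\fram}(K^{\mydot}) \cong \myR\Gamma_{\frn}(S)_0 \to \myR\Gamma_{\frn}(S) \to \myR\Gamma_{\fram}(C)
\]
is zero in $D(R)$. By Step 1, this composite is exactly the map $\myR\Gamma_{\fram}(K^{\mydot}) \to \myR\Gamma_{\fram}(R)\to \myR\Gamma_{\fram}(C)$ in the statement, which proves the theorem. For the inclusion-of-summand step one also needs that the projection $\myR\Gamma_{\frn}(S)\to\myR\Gamma_{\fram}(R)$ restricted to degree zero agrees with the Step 1 map, which again comes down to the bookkeeping in Step 1.
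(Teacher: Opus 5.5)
Your proof is correct. It shares the paper's skeleton: the Sancho de Salas identification $K^{\mydot}\cong[\myR\Gamma_{S_{>0}}S]_0$, factoring through $\myR\Gamma_{\frn}(B)$, and a degree argument. You apply the degree argument at a different spot, and this is a genuine simplification.

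\textbf{The paper's route.} It kills $\tau_{\leq n}\myR\Gamma_{\frn}(S)\to\myR\Gamma_{\frn}(B)$ using $\myR\Gamma_{\frn}(B)\in D^{\geq n+1}$. It then has to check separately that $\myR\Gamma_{\fram}(K^{\mydot})\in D^{\leq n}$, i.e.\ that $H^n_{\fram}(R)\to H^n_{\fram}(\myR\Gamma(Y,\cO_Y))$ is surjective. It deduces this from \autoref{eq.DimSuppBoundsOnK}, or alternatively from duality.

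\textbf{Your route.} You use instead that the target $\myR\Gamma_{\frn}(C)=\myR\Gamma_{\fram}(C)$ lies in $D^{\leq n}$, because $\fram$ is generated up to radical by $n$ elements. So the map $\myR\Gamma_{\frn}(B)\to\myR\Gamma_{\fram}(C)$ is already zero. This kills the entire map $\myR\Gamma_{\frn}(S)\to\myR\Gamma_{\fram}(C)$ in $D(S)$. As a result, no bound on the cohomological degrees of $\myR\Gamma_{\fram}(K^{\mydot})$ is needed for this theorem. You only use $\myR\Gamma_{\frn}(B)\in D^{\geq n+1}$, so the upper vanishing of $H^i_{\frn}(B)$ in your Step 3 is unnecessary.

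\textbf{The bookkeeping in Step 1.} This is lighter than you suggest. A fiber of a fixed morphism is unique up to an isomorphism compatible with its map to $R$: extend $(\mathrm{id}_R,\mathrm{id})$ to a morphism of triangles. So all you need is that the degree-zero part of $S\to\myR\Gamma(U,\cO_U)$ is the pullback $R\to\myR\Gamma(Y,\cO_Y)$. That is exactly the input the paper cites from Lipman.
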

\begin{proof}
    The following triangle can be viewed as a variant of the Sancho de Salas sequence
    \[
        [\myR\Gamma_{S_{>0}} S]_0 \to R \to \myR \Gamma(Y, \cO_Y) \xrightarrow{+1},
    \] 
    see for instance \cite[Page 150]{LipmanCohenMacaulaynessInGradedAlgebras}, and so $[\myR\Gamma_{S_{>0}} S]_0 \cong K^{\mydot}$.
    Consider the composition
    $
        \myR\Gamma_{\frn}(S) \cong \myR\Gamma_{\frn}(S_{\frn}) \to S_{\frn} \to B
    $
    which can also be factored as 
    \[
        \myR\Gamma_{\frn}(S) \to \myR\Gamma_{\frn}(B) \to B.
    \]
    As $S_{\frn}$ has dimension $n+1$, we see that $\tau_{\leq n}\myR\Gamma_{\frn}(B) = 0$ and so $\tau_{\leq n} \myR\Gamma_{\frn}(S) \to \myR\Gamma_{\frn}(B)$ is also zero.  Hence we have that 
    \[
        \tau_{\leq n}\myR\Gamma_{\fram}(K^{\mydot}) = \tau_{\leq n}\myR\Gamma_{\fram}([\myR\Gamma_{S_{>0}} S]_0) \to \tau_{\leq n}\myR\Gamma_{\fram}(\myR\Gamma_{S_{>0}}(S)) = \tau_{\leq n}\myR\Gamma_{\frn}(S) \to \myR\Gamma_{\frn}(B)
    \]
    is zero.  But using the map $B \to C$, we see that $\tau_{\leq n}\myR\Gamma_{\fram}(K^{\mydot}) \to \myR\Gamma_{\frn}(C) \to \myR\Gamma_{\fram}(C)$ is also zero.  Thus, it suffices to show that $\myR\Gamma_{\fram}(K^{\mydot})$ only has cohomology in degrees $\leq n$ so that $\tau_{\leq n} \myR\Gamma_{\fram}(K^{\mydot}) \cong \myR\Gamma_{\fram}(K^{\mydot})$.  In view of the triangle defining $K^{\mydot}$, it suffices to observe that $H^n_{\fram}(R) \to H^n_{\fram}(\myR\Gamma(Y, \cO_Y))$ surjects.  But that follows from \autoref{eq.DimSuppBoundsOnK}. Alternatively, assuming the existence of a dualizing complex, that map is  Matlis dual to $\Gamma(Y, \omega_{Y}) \to \omega_{R}$ which injects since $Y \to \Spec R$ is birational and $R$ is reduced.  This completes the proof.
\end{proof}

   The following lemma is the tool which will turn \autoref{thm.LocalCohomMapIsZero} into our main result.

\begin{lemma}
    \label{lem.MainTechnicalLemma}
    Suppose $(R, \fram)$ is a Noetherian equidimensional universally catenary local ring of dimension $n$ and $L^{\mydot} \in D^b_{fg}(R)$ has the property that $\dim \Supp \myH^j(L^{\mydot}) \leq n-j$ for all $j$.  Suppose $M$ is a weakly cohomologically Cohen-Macaulay $R$-module and $f : L^{\mydot} \to M$ is such that 
    \[
        \myR\Gamma_{\frq}(f_{\frq}) : \myR\Gamma_{\frq}(L^{\mydot}_\frq) \to \myR\Gamma_{\frq}(M_{\frq})
    \]
    is the zero map in $D(R)$ for each $\frq \in \Spec R$.  Then $f = 0$ in $D(R)$ as well.
\end{lemma}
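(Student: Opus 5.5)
The plan is to replace $M$ by its Cousin complex and peel off one term of that complex at a time. By \autoref{cor.SharpCousinCMForNonLocal}, $M \to \mathcal{C}'(D_\bullet, M) = (0 \to C^0 \to \dots \to C^n \to 0)$ is an isomorphism in $D(R)$, so we may regard $f$ as a map $L^{\mydot} \to \mathcal{C}'(D_\bullet, M)$. Filter the target by the stupid truncations $F^i := \sigma_{\geq i}\mathcal{C}'(D_\bullet,M)$. These fit into triangles
\[
F^{i+1} \to F^i \to C^i[-i] \xrightarrow{+1},
\]
with $F^0 = \mathcal{C}'$ and $F^{n+1} = 0$. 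I will show by induction on $i$ that $f$ lifts, uniquely, to a map $f_i : L^{\mydot} \to F^i$. Since $F^{n+1} = 0$, this forces $f = 0$.

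\textbf{Local description of each term.} Write $C^i = \bigoplus_{\mathrm{ht}\,\frq = i} C^i(\frq)$. By \autoref{lem.CousinBasics}(\ref{lem.CousinBasics.CommutingWithLocalization}) and (\ref{lem.CousinBasics.SuppOfCousinPiece}), each $C^i(\frq)$ is an $R_\frq$-module supported only at $\frq R_\frq$, i.e. it is $\frq$-torsion. Moreover $C^i(\frq)$ is the top term of the Cousin complex of $M_\frq$. Since $M_\frq$ is weakly CM over the $i$-dimensional ring $R_\frq$, \autoref{lem.CousinBasics}(\ref{lem.CousinBasics.TopIsLocalCohom}) together with \autoref{thm.SharpViaCousin} gives $C^i(\frq) \cong H^i_\frq(M_\frq)$ and $\myR\Gamma_\frq(M_\frq) \cong C^i(\frq)[-i]$. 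Because the target is $\frq$-local and $\frq$-torsion, any map $L^{\mydot} \to C^i(\frq)[-i+k]$ factors uniquely as $L^{\mydot} \to \myR\Gamma_\frq(L^{\mydot}_\frq) \to C^i(\frq)[-i+k]$.

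\textbf{Key vanishing step.} I claim $\Hom_{D(R)}(L^{\mydot}, C^i[-i-1]) = 0$. The sum over $\frq$ can be pulled out because $L^{\mydot}$ is pseudo-coherent and bounded while the targets are modules sitting in a fixed degree. So it suffices to show $\Hom(\myR\Gamma_\frq(L^{\mydot}_\frq), C^i(\frq)[-i-1]) = 0$ for each $\frq$ of height $i$. Since $R$ is catenary and equidimensional, the hypothesis $\dim\Supp \myH^j(L^{\mydot}) \leq n-j$ gives $\dim \Supp \myH^j(L^{\mydot}_\frq) \leq i - j$ over $R_\frq$. Hence $H^k_\frq(\myH^j(L^{\mydot}_\frq)) = 0$ for $k > i-j$, and the spectral sequence shows that $\myR\Gamma_\frq(L^{\mydot}_\frq)$ has cohomology only in degrees $\leq i$. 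A complex concentrated in degrees $\leq i$ admits no nonzero maps to a module placed in degree $i+1$, which proves the claim. This is the point where the dimension hypothesis on $L^{\mydot}$ is essential. I expect it to be the main obstacle: one has to make the degree bound and the commutation of $\Hom$ with the direct sum rigorous.

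\textbf{Induction.} Suppose $f_i : L^{\mydot} \to F^i$ is the unique lift of $f$. Its composite $L^{\mydot} \to C^i[-i]$ has $\frq$-component equal to some map $L^{\mydot} \to C^i(\frq)[-i]$. Localizing at $\frq$ (again via \autoref{lem.CousinBasics}(\ref{lem.CousinBasics.CommutingWithLocalization})) turns the whole construction into the same construction for $f_\frq : L^{\mydot}_\frq \to M_\frq$. For the $i$-dimensional ring $R_\frq$, the lower pieces of the filtration give no room for ambiguity, by the vanishing just proved applied over $R_\frq$. So uniqueness of lifts identifies $(F^i)_\frq$ with $C^i(\frq)[-i] \cong \myR\Gamma_\frq(M_\frq)$ compatibly with the lifted maps. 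Under this identification the component is the map $L^{\mydot}_\frq \to \myR\Gamma_\frq(L^{\mydot}_\frq) \xrightarrow{\myR\Gamma_\frq(f_\frq)} \myR\Gamma_\frq(M_\frq)$, which vanishes by hypothesis. Therefore $f_i$ lifts to $f_{i+1} : L^{\mydot} \to F^{i+1}$. The lift is unique because the ambiguity lies in $\Hom(L^{\mydot}, C^i[-i-1]) = 0$. After $n+1$ steps we land in $F^{n+1} = 0$, so $f = 0$. The one bookkeeping point to check carefully is the compatibility between the unique lifts and localization.
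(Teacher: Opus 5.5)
Your architecture matches the paper's: replace $M$ by its non-augmented Cousin complex, peel off $C^i[-i]$ via stupid truncations, and kill each component $L^{\mydot} \to C^i(\frq)[-i]$ one height-$i$ prime at a time. The gap is in the step that actually kills that component.

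\textbf{The false principle.} You assert that, because $C^i(\frq)$ is $\frq$-local and $\frq$-torsion, every map $L^{\mydot} \to C^i(\frq)[-i+k]$ ``factors uniquely'' through $\myR\Gamma_\frq(L^{\mydot}_\frq)$. In the last step you then write the component as $L^{\mydot}_\frq \to \myR\Gamma_\frq(L^{\mydot}_\frq) \to \myR\Gamma_\frq(M_\frq)$. There is no map $L^{\mydot}_\frq \to \myR\Gamma_\frq(L^{\mydot}_\frq)$; the canonical map $\iota : \myR\Gamma_\frq(L^{\mydot}_\frq) \to L^{\mydot}_\frq$ goes the other way.

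Write $\mu_\frq : L^{\mydot}_\frq \to C^i(\frq)[-i]$ for the component. You correctly identify $(F^i)_\frq = C^i(\frq)[-i]$, and $\myR\Gamma_\frq$ turns $C^i(\frq)[-i] \to M_\frq$ into an isomorphism. So the hypothesis only gives $\mu_\frq \circ \iota = 0$. To conclude $\mu_\frq = 0$ you need precomposition with $\iota$ to be injective on $\Hom_{D(R)}(-, C^i(\frq)[-i])$. Torsion-ness of the target does not give this. Take $R$ a DVR, $M = R$, $E = H^1_\fram(R) = C^1(\fram)$ and $L^{\mydot} = R[-1]$. Then $\Hom_{D(R)}(R[-1], E[-1]) = E \neq 0$, while $\Hom_{D(R)}(\myR\Gamma_\fram(R[-1]), E[-1]) = \Hom_{D(R)}(E[-2], E[-1]) = 0$ because $E$ is injective. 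This $L^{\mydot}$ violates the support hypothesis, which is exactly the point: injectivity must come from that hypothesis. Uniqueness of lifts does not help, since it pins down $\mu_\frq$ but does not show it is zero.

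\textbf{The missing idea.} This is the paper's \autoref{clm.InjectiveMapOnDR.lem.MainTechnicalLemma}. Let $U = \Spec R_\frq \setminus \{\frq R_\frq\}$. The triangle $\myR\Gamma_\frq(L^{\mydot}_\frq) \to L^{\mydot}_\frq \to \myR\Gamma(U, \widetilde{L}^{\mydot}_\frq) \xrightarrow{+1}$ reduces the needed injectivity to $\Hom_{D(R)}(\myR\Gamma(U, \widetilde{L}^{\mydot}_\frq), C^i(\frq)[-i]) = 0$. That vanishing holds because $\myR\Gamma(U, \widetilde{L}^{\mydot}_\frq) \in D^{<i}$: on $U$ the support of $\myH^j$ has dimension at most $i-j-1$, so Grothendieck vanishing and the hypercohomology spectral sequence apply.

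This is where the dimension hypothesis is genuinely used. Your bound on $\myR\Gamma_\frq(L^{\mydot}_\frq)$ is not the statement needed. Conversely, the step you flagged as essential, $\Hom_{D(R)}(L^{\mydot}, C^i[-i-1]) = 0$, is true for a simpler reason than the one you gave (which relied on the same false principle): $L^{\mydot}_\frq \in D^{\leq i}$. It is also unnecessary, since the argument works with any lift $f_i$.
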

The proof we provide first follows a strategy suggested by the LLM.  Below, we also provide a sketch of an alternate proof explained to us by Bhargav Bhatt and used with his permission.
\begin{proof}
    By Sharp's result \cite{Sharp.CousinComplexBCM}, \autoref{thm.SharpViaCousin}, we can replace $M$ by the associated non-augmented Cousin complex $C'^{\mydot}$ induced by the height filtration as in \autoref{subsec.CousinComplexes}.  
    For each integer $i \in \bZ$, consider the triangle induced by the stupid truncations
    \[
        \sigma_{\geq i+1} C'^{\mydot} \to \sigma_{\geq i} C'^{\mydot} \to C'^{i}[-i] \xrightarrow{+1},
    \]
    see for instance \cite[\href{https://stacks.math.columbia.edu/tag/0118}{Tag 0118}]{stacks-project}.  
    At the generic points $\frq$ of $\Spec R$ (corresponding to irreducible components), the hypothesis guarantees that $L^{\mydot}_{\frq} \cong \myR\Gamma_{\frq}(L^{\mydot}_\frq) \to \myR\Gamma_{\frq}(M_{\frq}) \cong C'^0_{\frq}$ is the zero map.  As there are finitely many such minimal primes $\frq$, and $C'^0 = \oplus_{\frq} M_{\frq}$ by construction, we see that $L^{\mydot} \to M \to C'^0$ is zero.  
    
    We will show by induction that we have a factorization of $f$, $L^{\mydot} \to \sigma_{\geq i} C'^{\mydot} \to C'^{\mydot}$.  The argument above is the first step.  Indeed, we just showed that the solid diagonal map in the diagram below is zero and hence the dotted arrow below exists:
    \[
    \begin{tikzcd}
& L^{\mydot} \ar[rd, "\mu"] \ar[d] \ar[dl, dotted]  \\
 \sigma_{\geq 1} C'^{\mydot} \ar[r] & C'^{\mydot} \ar[r] & C'^0 \ar[r, "+1"] & {}.
\end{tikzcd}
\]
In particular, we now have a map $L^{\mydot} \to \sigma_{\geq 1} C'^{\mydot}$.  Proceeding by induction, suppose we have a factorization $f : L^{\mydot} \xrightarrow{f_i} \sigma_{\geq i} C'^{\mydot} \to C'^{\mydot}$.  We will show that $f$ also factors through $\sigma_{\geq i+1} C'^{\mydot}$.  This will complete the proof as $\sigma_{\geq n+1} C'^{\mydot} = 0$.

By induction, we have the following diagram
\begin{equation}
    \label{eq.InductiveDiagonalMap}
    \begin{tikzcd}
        & L^{\mydot} \ar[d,"f_i"'] \ar[dr] & \\
        \sigma_{\geq i+1} C'^{\mydot} \ar[r] & \sigma_{\geq i} C'^{\mydot} \ar[r] & C'^i[-i] \ar[r, "+1"] & {}.
    \end{tikzcd}
\end{equation}

We now prove two claims.  

\begin{claim}
    \label{clm.InjectiveMapOnDR.lem.MainTechnicalLemma}
    The map $\Hom_{D(R)}(L^{\mydot}_{\frq}, C_{\frq}^i[-i]) \to \Hom_{D(R)}(\myR\Gamma_{\frq}(L^{\mydot}_{\frq}), C_{\frq}^i[-i])$ is injective for each $\frq \in \partial D_i$.
\end{claim}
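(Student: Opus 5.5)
The plan is to turn injectivity into a vanishing statement using the local cohomology triangle, and then prove that vanishing by a degree count.

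Fix $\frq \in \partial D_i$, so $\mathrm{ht}\,\frq = i$. Work in $D(R_\frq)$; Hom sets computed there agree with those in $D(R)$, since all objects involved are $R_\frq$-complexes. Consider the triangle
\[
\myR\Gamma_{\frq}(L^{\mydot}_\frq) \to L^{\mydot}_\frq \to L'^{\mydot} \xrightarrow{+1},
\]
where $L'^{\mydot}$ is the cone; it computes $\myR\Gamma$ of $L^{\mydot}_\frq$ on the punctured spectrum of $R_\frq$. Apply $\Hom(-, C^i_\frq[-i])$. The kernel of the map in the claim is the image of $\Hom(L'^{\mydot}, C^i_\frq[-i])$. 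So it suffices to show $\Hom(L'^{\mydot}, C^i_\frq[-i]) = 0$. Since $C^i_\frq[-i]$ is a module placed in degree $i$, this will follow once we know $L'^{\mydot} \in D^{\leq i-1}$.

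\textbf{Dimension bounds after localizing.} Since $R$ is equidimensional and catenary, a prime $\frp \subseteq \frq$ with $\dim R/\frp \leq n-j$ has $\mathrm{ht}\,\frp \geq j$, hence $\dim R_\frq/\frp R_\frq = i - \mathrm{ht}\,\frp \leq i-j$. The hypothesis on $L^{\mydot}$ therefore gives
\[
\dim \Supp_{R_\frq} \myH^j(L^{\mydot}_\frq) \leq i-j.
\]
In particular, $\myH^j(L^{\mydot}_\frq) = 0$ for $j > i$, and $\myH^i(L^{\mydot}_\frq)$ has finite length. Notice that this is exactly where the support hypothesis on $L^{\mydot}$ is used.

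\textbf{Local cohomology of $L^{\mydot}_\frq$.} Use the hypercohomology spectral sequence
\[
E_2^{a,b} = H^a_\frq(\myH^b(L^{\mydot}_\frq)) \Rightarrow H^{a+b}_\frq(L^{\mydot}_\frq).
\]
By Grothendieck vanishing, $E_2^{a,b} = 0$ unless $0 \leq a \leq i-b$, i.e. unless $a+b \leq i$. Two consequences follow.
\begin{itemize}
\item $H^k_\frq(L^{\mydot}_\frq) = 0$ for $k > i$.
\item On the line $a+b = i$, the term $E_2^{0,i} = H^0_\frq(\myH^i(L^{\mydot}_\frq)) = \myH^i(L^{\mydot}_\frq)$ (the equality because this module has finite length) is not hit by any differential, since the relevant sources have $a<0$. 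Its outgoing differentials land in terms with $a+b = i+1$, which vanish. So $E_2^{0,i} = E_\infty^{0,i}$, and the edge map $H^i_\frq(L^{\mydot}_\frq) \to \myH^i(L^{\mydot}_\frq)$ is surjective.
\end{itemize}

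\textbf{Conclusion.} The long exact sequence
\[
H^j_\frq(L^{\mydot}_\frq) \to \myH^j(L^{\mydot}_\frq) \to \myH^j(L'^{\mydot}) \to H^{j+1}_\frq(L^{\mydot}_\frq)
\]
now gives $\myH^j(L'^{\mydot}) = 0$ for all $j \geq i$. For $j > i$ the two flanking terms $\myH^j(L^{\mydot}_\frq)$ and $H^{j+1}_\frq(L^{\mydot}_\frq)$ vanish. For $j = i$, $H^{i+1}_\frq(L^{\mydot}_\frq)$ vanishes and the first map is surjective. Thus $L'^{\mydot} \in D^{\leq i-1}$, so $\Hom(L'^{\mydot}, C^i_\frq[-i]) = 0$ and the map in the claim is injective.

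The main point of care is the edge-map surjectivity in degree $i$. If that step resists, I would instead truncate: apply the triangle $\tau_{<i}L^{\mydot}_\frq \to L^{\mydot}_\frq \to \myH^i(L^{\mydot}_\frq)[-i]$ and use that $\myR\Gamma_\frq$ fixes the finite-length module $\myH^i(L^{\mydot}_\frq)$.
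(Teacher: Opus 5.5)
Your proof is correct and follows the same strategy as the paper. It uses the triangle $\myR\Gamma_{\frq}(L^{\mydot}_{\frq}) \to L^{\mydot}_{\frq} \to \myR\Gamma(U, \widetilde{L}^{\mydot}_{\frq}) \xrightarrow{+1}$ to reduce to showing the cone lies in $D^{\leq i-1}$, and verifies this via the localized support bound $\dim \Supp \myH^j(L^{\mydot}_{\frq}) \leq i-j$ and a spectral sequence. The only difference is that the paper applies Grothendieck vanishing directly to sheaf cohomology on the punctured spectrum $U$, where supports lose a dimension and the degree-$i$ term dies automatically; you instead bound $H^k_{\frq}(L^{\mydot}_{\frq})$ on $\Spec R_{\frq}$, which requires the extra edge-map surjectivity in degree $i$ that you correctly supply.
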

\begin{proof}[Proof of claim]
    Fix $\frq \in \partial D_i$ and set $U$ to be  $\Spec R_{\frq} \setminus \{\frq R_{\frq}\}$.  In view of the triangle 
    $\myR \Gamma_{\frq}(L_{\frq}^{\mydot}) \to L_{\frq}^{\mydot} \to \myR\Gamma(U, \widetilde{L}^{\mydot}_{\frq}) \xrightarrow{+1}$
    it suffices to prove that $\Hom_{D(R)}(\myR\Gamma(U, \widetilde{L}^{\mydot}_{\frq}), C_{\frq}^i[-i]) = 0$ (here $\widetilde{L}^{\mydot}$ denotes the corresponding complex of quasi-coherent $\cO_{\Spec R}$-modules on the affine scheme $\Spec R$).  For that, it suffices to show that $\myR\Gamma(U, \widetilde{L}_{\frq}^{\mydot}) \in D^{< i}$.
    
    Notice that $\dim U  = \dim R_{\frq} - 1 = n - (n - i) - 1 < i$ as we removed the unique closed point of $\Spec R_{\frq}$.  
    Furthermore, by hypothesis and the catenary and equidimensional hypotheses, 
    we know that $\dim \Supp \myH^j(L^{\mydot}_{\frq}) \leq i - j$ and hence similarly $\dim \Supp \myH^j(\widetilde{L}^{\mydot}_{\frq}|_U) \leq i - j - 1$ viewed as a subset of $U$ (closed subsets of $\Spec R_{\frq}$ lose one dimension when removing the closed point).  Hence $H^k(U, \myH^j (\widetilde{L}^{\mydot}_{\frq})) = 0$ if $k \geq i - j$, or in other words if $k + j \geq i$.  But then a spectral sequence argument implies that
    \[
        \myH^{\geq i}(\myR\Gamma(U, \widetilde{L}_{\frq}^{\mydot})) = 0
    \]
    as desired, proving the claim.
\end{proof}

\begin{claim}
    \label{clm.MainInductiveStep.lem.MainTechnicalLemma}
    The map $L^{\mydot} \to C'^i[-i]$ is zero.
\end{claim}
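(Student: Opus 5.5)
The plan is to check that the composite $g : L^{\mydot} \xrightarrow{f_i} \sigma_{\geq i} C'^{\mydot} \to C'^i[-i]$ vanishes one component $C^i(\frq)$, $\frq \in \partial D_i$, at a time. For each component I would pass to the localization at $\frq$ and use \autoref{clm.InjectiveMapOnDR.lem.MainTechnicalLemma} to reduce to a statement about $\myR\Gamma_{\frq}$. There the hypothesis $\myR\Gamma_{\frq}(f_{\frq}) = 0$ applies, because after localizing and applying $\myR\Gamma_{\frq}$ the truncation $\sigma_{\geq i}$ no longer changes anything.

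\textbf{Step 1 (reduction to a single summand).} Since $L^{\mydot} \in D^b_{fg}(R)$, I would replace it by a bounded-above complex $P^{\mydot}$ of finite free modules. Then $\Hom_{D(R)}(L^{\mydot}, N[-i])$ is computed by $H^{-i}$ of the complex $\Hom(P^{\mydot}, N)$. In any fixed degree this only involves finitely many finite free $P^j$, so it commutes with arbitrary direct sums in $N$. Hence
\[
\Hom_{D(R)}\big(L^{\mydot}, C'^i[-i]\big) = \bigoplus_{\frq \in \partial D_i} \Hom_{D(R)}\big(L^{\mydot}, C^i(\frq)[-i]\big).
\]
So it suffices to show that each component $g_{\frq}$ of $g$ is zero.

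Each $C^i(\frq)$ is an $R_{\frq}$-module, so the component $g_{\frq}$ is the same as its localization $L^{\mydot}_{\frq} \to C^i_{\frq}[-i]$. By \autoref{lem.CousinBasics}\autoref{lem.CousinBasics.SuppOfCousinPiece}, the localization $(C'^i)_{\frq}$ equals $C^i(\frq)$, because the other summands $C^i(\frq')$ with $\frq' \in \partial D_i$ vanish at $\frq$. By \autoref{clm.InjectiveMapOnDR.lem.MainTechnicalLemma}, it then suffices to show that the composite $\myR\Gamma_{\frq}(L^{\mydot}_{\frq}) \to L^{\mydot}_{\frq} \to C^i_{\frq}[-i]$ is zero.

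\textbf{Step 2 (the localized truncation).} Localize the whole situation at $\frq$. By \autoref{lem.CousinBasics}\autoref{lem.CousinBasics.CommutingWithLocalization}, $C'^{\mydot}_{\frq}$ is the non-augmented Cousin complex of $M_{\frq}$ over $R_{\frq}$, which has dimension $i$. By \autoref{lem.WeaklyBCMLocalizes} and \autoref{thm.SharpViaCousin}, $M_{\frq} \to C'^{\mydot}_{\frq}$ is an isomorphism in $D(R)$.

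Since $\mathrm{ht}\,\frq = i$, \autoref{lem.CousinBasics}\autoref{lem.CousinBasics.SuppOfCousinPiece} gives $C'^j_{\frq} = 0$ for $j > i$. Hence $(\sigma_{\geq i} C'^{\mydot})_{\frq} = C^i_{\frq}[-i]$. This module is supported only at $\frq R_{\frq}$, so it is $\frq$-power torsion and $\myR\Gamma_{\frq}(C^i_{\frq}) = C^i_{\frq}$.

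Also, $\myR\Gamma_{\frq}(C'^j_{\frq}) = 0$ for $j < i$ by \autoref{lem.CousinBasics}\autoref{lem.CousinBasics.TopIsLocalCohom}, applied over $R_{\frq}$ (this needs $\dim\Supp M_{\frq} = i$; if $M_{\frq}=0$ there is nothing to prove). Filtering $C'^{\mydot}_{\frq}$ by stupid truncations, this shows that $\myR\Gamma_{\frq}$ of the inclusion $(\sigma_{\geq i}C'^{\mydot})_{\frq} \to C'^{\mydot}_{\frq}$ is an isomorphism. So we get
\[
C^i_{\frq}[-i] \cong \myR\Gamma_{\frq}\big((\sigma_{\geq i}C'^{\mydot})_{\frq}\big) \xrightarrow{\ \sim\ } \myR\Gamma_{\frq}(C'^{\mydot}_{\frq}) \cong \myR\Gamma_{\frq}(M_{\frq}).
\]

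\textbf{Step 3 (conclusion).} By naturality of $\myR\Gamma_{\frq}(-) \to (-)$, the composite $\myR\Gamma_{\frq}(L^{\mydot}_{\frq}) \to L^{\mydot}_{\frq} \to C^i_{\frq}[-i]$ equals $\myR\Gamma_{\frq}((f_i)_{\frq})$ under the identification of Step 2. Composing $\myR\Gamma_{\frq}((f_i)_{\frq})$ with the isomorphism of Step 2 gives $\myR\Gamma_{\frq}(f_{\frq})$, which is zero by hypothesis. Since that isomorphism is invertible, $\myR\Gamma_{\frq}((f_i)_{\frq}) = 0$, so the composite vanishes. \autoref{clm.InjectiveMapOnDR.lem.MainTechnicalLemma} then gives $g_{\frq} = 0$ for every $\frq \in \partial D_i$, and hence $g = 0$ by Step 1.

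The main obstacle I expect is the bookkeeping in Steps 1 and 2. In Step 1 one must justify that Hom out of $L^{\mydot}$ commutes with the possibly infinite direct sum $C'^i$, which is where pseudo-coherence is used. In Step 2 one must check that $\myR\Gamma_{\frq}$ kills the lower Cousin terms after localization. This is exactly what makes the truncation $\sigma_{\geq i}$ invisible to $\myR\Gamma_{\frq}$ at $\frq$.
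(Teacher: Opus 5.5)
Your proposal is correct and follows essentially the same route as the paper: you split $C'^i[-i]$ into its summands $C^i(\frq)$ using that $L^{\mydot}$ is pseudo-coherent, and you localize at $\frq$. You then check that $\myR\Gamma_{\frq}$ turns the canonical map $C^i(\frq)[-i] = (\sigma_{\geq i}C'^{\mydot})_{\frq} \to C'^{\mydot}_{\frq} \cong M_{\frq}$ into an isomorphism, so that $\myR\Gamma_{\frq}$ of the localized component is $\myR\Gamma_{\frq}(f_{\frq})$ up to isomorphism, and you finish with \autoref{clm.InjectiveMapOnDR.lem.MainTechnicalLemma}. (Your case split on $\dim \Supp M_{\frq}$ is harmless but unnecessary, since $\myR\Gamma_{\frq}(C'^j_{\frq}) = 0$ for $j<i$ holds regardless: each such term is a direct sum of modules over $R_{\frp}$ with $\frp \subsetneq \frq$.)
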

\begin{proof}[Proof of claim]
    As $C'^{i}[-i]$ is a shifted direct sum of $R_{\frq}$-modules $C^i(\frq)$ for $\frq \in \partial D_i$, $L^{\mydot} \in D^b_{fg}(R)$, and $R$ is Noetherian, we see that it suffices to prove that each induced $L^{\mydot} \to C^i(\frq)[-i]$ is zero.  This map factors through $\mu_{\frq} : L_{\frq}^{\mydot} \to C^i(\frq)[-i]$ as the target is already an $R_{\frq}$-module.  It suffices to show that $\mu_{\frq}$ is zero.
    
    If $\dim \Supp M_{\frq} < i$ then $C^i(\frq) = 0$ already by construction and there is nothing to show, so we may suppose that $\dim \Supp M_{\frq} = i$.
    Now, as $C^i(\frq)$ is supported at the maximal ideal of $R_{\frq}$, we see that $\myR\Gamma_{\frq}(C^i(\frq)[-i]) \to C^i(\frq)[-i]$ is an isomorphism.  Consider the canonical map $C^i(\frq)[-i] \to C'^{\mydot}_{\frq} \cong M_{\frq}$ in the derived category and apply  $\myR\Gamma_{\frq}(-)$.  Now, $M$ is weakly cohomologically Cohen-Macaulay and $\dim \Supp M_{\frq} = i$ so that by \autoref{lem.CousinBasics} \autoref{lem.CousinBasics.TopIsLocalCohom} we see that this map is an isomorphism:
    \[
         \myR\Gamma_{\frq}(C^i(\frq)[-i]) \cong \myH^i_{\frq}(M_{\frq})[-i] \cong \myR\Gamma_{\frq}(M_{\frq}).
    \]
    Putting this together we can view $\myR\Gamma_{\frq}(\mu_{\frq})$ as 
    \[
        \myR\Gamma_{\frq}(L^{\mydot}_{\frq}) \to \myR\Gamma_{\frq}(M_{\frq}) \cong \myR\Gamma_{\frq}(C^i(\frq)[-i]) \cong C^i({\frq})[-i].
    \]
    That map is zero by hypothesis and then $\mu_{\frq} : L^{\mydot}_{\frq} \to C^i({\frq})[-i]$ is zero by \autoref{clm.InjectiveMapOnDR.lem.MainTechnicalLemma}.
\end{proof}

    By \autoref{clm.MainInductiveStep.lem.MainTechnicalLemma}, we see that $L^{\mydot} \xrightarrow{f_i} \sigma_{\geq i} C'^{\mydot} \to C'^{i}[-i]$ is zero and hence $f_i$ factors through $\sigma_{\geq i+1} C'^{\mydot} \to \sigma_{\geq i} C'^{\mydot}$.  This completes the proof. 
\end{proof}

Now we sketch the alternate proof provided by Bhatt which uses the middle perverse $t$-structure as developed in \cite{Gabber.tStruc}, see also \cite[Section 3]{BMPSTWW2} for a friendly summary in a similar case.  Thus we also assume that $R$ has a dualizing complex (or that we can reduce to that case) as that is required in the above sources.  We recall the following definitions for $M^{\mydot} \in D^b(R)$.
\begin{itemize}
    \item{} $M^{\mydot} \in {}^p D^{\leq 0}(R)$ if for each $\frq \in \Spec R$, we have $M^{\mydot}_{\frq} \in D^{\leq -\dim R/\frq}$.  This can also be checked by verifying that $\myR\Gamma_{\frq}(M^{\mydot}_{\frq}) \in D^{\leq -\dim R/\frq}$ for every $\frq$ (see the proof of \cite[Lemma 3.2]{BMPSTWW2}).
    \item{} $M^{\mydot} \in {}^p D^{\geq 0}(R)$ if for each $\frq \in \Spec R$, we have $\myR\Gamma_{\frq}(M^{\mydot}_{\frq}) \in D^{\geq -\dim R/\frq}$.
\end{itemize}
We note that we have a functor $\Psi_{\frq}(-) = \myH^{-\dim R/\frq}(\myR\Gamma_{\frq}(-_{\frq}))$.  Observe that $\Psi_{\frq}(-)$ is an exact functor from the perverse heart $D^b(R)^{\heartsuit}$ to $\mathrm{Mod}_{R_{\frq}}$.  We also note that $\Psi_{\frq}(-)$ can be viewed as $\myR\Gamma_{\frq}(-_{\frq})[-\dim R/\frq]$ when acting on  $D^b(R)^{\heartsuit}$ since $\myR\Gamma_{\frq}(-_{\frq})$ of a perverse object lives in a single cohomological degree (see \cite{Gabber.tStruc} and \cite[Section 3]{BMPSTWW2}).

\begin{proof}[{Alternate proof of \autoref{lem.MainTechnicalLemma} \lbrack Bhatt\rbrack}]
    We have that $L' := L^{\mydot}[n] \in {}^p D_{fg}^{\leq 0}(R)$ (that is, $L^{\mydot}[n]$ is coherent perverse connective) and that $M' := M[n] \in {}^p D^{\leq 0}(R) \cap {}^p D^{\geq 0}(R)$ (that is, $M[n]$ is perverse).  
    Consider the factorization $f' : L' \to {}^p \myH^0(L') \to {}^p \myH^0 (M') = M'$.  We claim we can replace $L'$ by ${}^p \myH^0(L')$.  The latter is certainly perverse so to prove the claim it suffices to prove that
    \[
        \Psi_{\frq}(L') \to \Psi_{\frq}({}^p \myH^0(L'))
    \]
    surjects.  Consider the perverse truncation triangle
    \[
        {}^p \tau_{\leq -1} L' \to L' \to {}^p{\myH}^0(L') \xrightarrow{+1}
    \]
    We must show that $H^{-\dim R/\frq+1}_{\frq}(({}^p \tau_{\leq -1} L')_{\frq}) = 0$.  But this follows from the fact that ${}^p \tau_{\leq -1} L'\in {}^p D^{\leq -1}$, and we also have that $H^{-\dim R/\frq}_{\frq}(({}^p \tau_{\leq -1} L')_{\frq}) = 0$.  Hence the desired map is not just surjective, it is an isomorphism.
    Thus we may replace $L'$ with its perverse $0$th cohomology and so assume that $L'$ is also perverse.
    
    Therefore, we can work in the heart of this $t$-structure, an Abelian category.  We want to prove that the map $f' : L' \to M'$ is zero.   
    Let $B'$ be the perverse image of $L' \to M'$, it suffices to show that $B'$ is zero.  Taking the perverse kernel $K'$ and cokernel $C'$ of $f'$ and applying $\Psi_{\frq}(-)$, we have short exact sequences 
        \[ 
            0 \to \Psi_{\frq}(K') \to \Psi_{\frq}(L') \to \Psi_{\frq}(B') \to 0
            \;\;\;\;\text{ and } \;\;\;\;        
            0 \to \Psi_{\frq}(B') \to \Psi_{\frq}(M') \to \Psi_{\frq}(C') \to 0.
        \]  
        Thus since $\Psi_{\frq}(L') \twoheadrightarrow \Psi_{\frq}(B') \hookrightarrow \Psi_{\frq}(M')$ is zero, we see that $\Psi_{\frq}(B')$ is zero for all $\frq$.  As $B'$ is perverse, we see that $\myR\Gamma_{\frq}(B'_{\frq}) = 0$ for all $\frq \in \Spec R$.
    
    It thus suffices to show that an arbitrary object $A^{\mydot} \in D(R)$ is zero if $\myR\Gamma_{\frq}(A^{\mydot}_{\frq}) = 0$ for all $\frq \in \Spec R$.  But that can be checked via induction on $\dim R_{\frq}$ and the triangle 
    \[
         \myR\Gamma_{\frq}(A^{\mydot}_{\frq}) \to A^{\mydot}_{\frq} \to \myR\Gamma(U, \widetilde{A^{\mydot}_{\frq}}) \xrightarrow{+1}
    \]
    where $U = \Spec R_{\frq} \setminus \{ \frq R_{\frq} \}$.  By induction, $A^{\mydot}_{\frp} = 0$ for all $\frp \in U$, so that  $\widetilde{A^{\mydot}_{\frq}}|_U = 0$ and hence  $\myR\Gamma(U, \widetilde{A^{\mydot}_{\frq}}) = 0$.  But then the hypothesis about vanishing local cohomology implies that $A^{\mydot}_{\frq} = 0$ and so the proof is complete by induction.
\end{proof}

\begin{theorem}
    \label{thm.MainTechnicalTheorem}
    With notation as in the start of the section, suppose that there is a commutative diagram of $S$-modules
    \[
        \xymatrix{
            S_{\frn} \ar[d] \ar[r] & R \ar[d] \\
            B_S \ar[r] & B_R
        }
    \] 
    where $B_S$ and $B_R$ are weakly balanced big Cohen-Macaulay $S_{\frn}$- and $R$-algebras, respectively.  
    Then we have a factorization 
    \[
        R \to \myR\Gamma(Y, \cO_Y) \to B_R.
    \]
\end{theorem}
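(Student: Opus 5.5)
We want $R \to B_R$ to factor through $R \to \myR\Gamma(Y,\cO_Y)$. By the triangle $K^{\mydot} \to R \to \myR\Gamma(Y,\cO_Y) \xrightarrow{+1}$, it suffices to show that the composition $f : K^{\mydot} \to R \to B_R$ is zero in $D(R)$. The factorization then follows from the long exact $\Hom_{D(R)}(-,B_R)$ sequence. We will obtain $f = 0$ by applying \autoref{lem.MainTechnicalLemma} with $L^{\mydot} = K^{\mydot}$ and $M = B_R$. The hypotheses are available as follows:
\begin{itemize}
\item $K^{\mydot} \in D^b_{fg}(R)$, since $\myR\Gamma(Y,\cO_Y)$ has finitely generated cohomology ($Y \to \Spec R$ is projective).
\item The support bound $\dim \Supp \myH^j(K^{\mydot}) \leq n-j$ is \autoref{eq.DimSuppBoundsOnK}.
\item $B_R$ is weakly balanced big Cohen-Macaulay over the local equidimensional catenary ring $R$. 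Hence it is weakly cohomologically Cohen-Macaulay by the remark after \autoref{lem.WeaklyBCMLocalizes} (localize, then use the Noetherian argument).
\end{itemize}

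The remaining hypothesis is that $\myR\Gamma_{\frq}(f_{\frq}) = 0$ for every $\frq \in \Spec R$. We plan to deduce this from \autoref{thm.LocalCohomMapIsZero} applied to the localized situation. Fix $\frq$. If $\frq \not\supseteq I$, then $Y \to \Spec R$ is an isomorphism over $\Spec R_{\frq}$, so $K^{\mydot}_{\frq} = 0$ and there is nothing to prove. Otherwise $I_{\frq}$ has positive height in $R_{\frq}$. The ring $R_{\frq}$ is again reduced, equidimensional and universally catenary, and its Rees algebra is $S' = S \otimes_R R_{\frq} = R_{\frq}[I_{\frq}t]$, with $\Proj S' = Y \times_{\Spec R} \Spec R_{\frq}$. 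By flat base change, the fiber of $R_{\frq} \to \myR\Gamma(Y_{\frq},\cO)$ is $K^{\mydot}_{\frq}$. Let $\frn'$ be the homogeneous maximal ideal $\frq S' + S'_{>0}$, corresponding to the prime $\frq S + S_{>0} \subseteq \frn$ of $S$.

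We need a module $B'$ over $S'_{\frn'}$ with $H^i_{\frn'}(B') = 0$ for $i < \dim S'_{\frn'}$, together with a commutative square over $\nu' : S'_{\frn'} \to R_{\frq}$ with target $C = (B_R)_{\frq}$. We take $B' = (B_S)_{\frq S + S_{>0}}$. Since $S_{\frn}$ is universally catenary and equidimensional (Rees algebras of equidimensional universally catenary rings are such), \autoref{lem.WeaklyBCMLocalizes} applies. It shows that $B'$ is weakly balanced big Cohen-Macaulay over $S_{\frq S+S_{>0}} = S'_{\frn'}$, or else zero. In either case the required local cohomology vanishes.

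The map $B_S \to B_R$ is $S$-linear, and $B_R$ is killed by $S_{>0}$ via $\nu$. Elements of $S$ outside $\frq S + S_{>0}$ act on $B_R$ through their degree-$0$ part, which lies outside $\frq$, so they act invertibly on $(B_R)_{\frq}$. Hence the map localizes to $B' \to (B_R)_{\frq}$, compatibly with $\nu'$. \autoref{thm.LocalCohomMapIsZero} over $R_{\frq}$ then gives that $\myR\Gamma_{\frq}(K^{\mydot}_{\frq}) \to \myR\Gamma_{\frq}((B_R)_{\frq})$ is zero, which is exactly $\myR\Gamma_{\frq}(f_{\frq}) = 0$.

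The main obstacle is this localization step. One has to check that the diagram hypotheses of \autoref{thm.LocalCohomMapIsZero} really survive localization at $\frq S + S_{>0}$: that $S$ satisfies the universally catenary and equidimensional conditions needed to invoke \autoref{lem.WeaklyBCMLocalizes}, and that $\dim S'_{\frn'} = \dim R_{\frq} + 1$, matching the degree bookkeeping in that proof. Once these are verified, \autoref{lem.MainTechnicalLemma} gives $f = 0$ and hence the factorization.
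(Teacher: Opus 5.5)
Your proposal is correct and is essentially the paper's own proof. You reduce to showing $K^{\mydot} \to B_R$ is zero, localize the square at the prime $\frq S + S_{>0}$ so that \autoref{thm.LocalCohomMapIsZero} kills $\myR\Gamma_{\frq}(K^{\mydot}_{\frq}) \to \myR\Gamma_{\frq}((B_R)_{\frq})$ for every $\frq$, and conclude with \autoref{lem.MainTechnicalLemma} and \autoref{eq.DimSuppBoundsOnK}. The extra checks you flag are details the paper leaves implicit, and they all hold: the case $I \not\subseteq \frq$, finite generation of the cohomology of $K^{\mydot}$, and the universally catenary, equidimensional and dimension properties of the localized Rees algebra.
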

\begin{proof}
    Setting $K^{\mydot}$ as at the start of the section, it suffices to show that $K^{\mydot} \to B_R$ is zero.  Notice that if $\frq \in \Spec R$, then $S_{\frq} = S \otimes_R R_{\frq}$ is the Rees algebra of $R_{\frq}$ along $I_{\frq}$.  If $\frn_{\frq}$ is the prime ideal of $S$ corresponding to the homogeneous maximal ideal of $S_{\frq}$, then we see that we have a diagram 
    \[
        \begin{tikzcd}
            S_{\frn_{\frq}} \ar[r] \ar[d] & R_{\frq} \ar[d] \\
            (B_S)_{\frn_{\frq}} \ar[r] & (B_R)_{\frq}
        \end{tikzcd}
    \]
    where the bottom row is either a map of weakly balanced big Cohen-Macaulay algebras or $(B_R)_{\frq} = 0$, see \autoref{lem.WeaklyBCMLocalizes}.
    Thus \autoref{thm.LocalCohomMapIsZero} implies that $\myR\Gamma_{\frq}(K^{\mydot}_\frq) \to \myR\Gamma_{\frq}( (B_R)_{\frq})$ is zero for all $\frq \in \Spec R$ (note if $(B_R)_{\frq}$ is zero, there is nothing to show).  But then, by \autoref{lem.MainTechnicalLemma} and \autoref{eq.DimSuppBoundsOnK}, the result follows.
\end{proof}

\begin{remark}
    Linquan Ma pointed out that if one additionally assumes that the balanced big Cohen-Macaulay module $B_R$ is $\fram$-adically complete (and so derived $\fram$-adically complete), then one can avoid \autoref{lem.MainTechnicalLemma} completely.  Indeed \autoref{thm.LocalCohomMapIsZero} combined with \cite[\href{https://stacks.math.columbia.edu/tag/0A6Y}{Tag 0A6Y}]{stacks-project} immediately implies that the derived completion (denoted ${-}^{\wedge})$
    \[
        K^{\mydot \wedge} \to  R^{\wedge} \to B_R^{\wedge}
    \]
    is zero.  Since $B_R = B_R^{\wedge}$ the result follows.
\end{remark}

\begin{corollary}
    \label{cor.MainCorollary}
    Suppose $R$ is a reduced universally catenary locally equidimensional Noetherian ring.  Suppose further we have a weak CM-assignment for reduced universally catenary locally equidimensional rings $R \mapsto B_R$ (or even simply for surjections from essentially of finite type reduced locally equidimensional $R$-algebras).  Then for every proper birational map $Y \to \Spec R$, there is a factorization in $D(R)$,
    \[
        R \to \myR\Gamma(Y, \cO_Y) \to B_R.
    \]    
\end{corollary}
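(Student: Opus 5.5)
The corollary should follow from \autoref{thm.MainTechnicalTheorem} once we reduce an arbitrary proper birational $Y \to \Spec R$ to a blowup of a local ring.

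\textbf{Reduction to the local case.} Set $K^{\mydot}$ to be the fiber of $R \to \myR\Gamma(Y, \cO_Y)$. A factorization exists if and only if $K^{\mydot} \to B_R$ is zero. Local checking is not automatic, since a map in $D(R)$ need not vanish just because its localizations vanish. So I would not try to localize the conclusion directly. Instead I would mimic the proof of \autoref{thm.MainTechnicalTheorem}: use the assignment to get vanishing of $\myR\Gamma_{\frq}(K^{\mydot}_{\frq}) \to \myR\Gamma_{\frq}((B_R)_{\frq})$ for every $\frq$. Then I would apply \autoref{lem.MainTechnicalLemma}, working locally on $\Spec R$ at the end. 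That lemma is stated for local $R$, so one applies it after localizing at each maximal ideal. If $R$ is not local, I would instead work with $R_{\fram}$ and the weak CM-assignment's value $B_{R_{\fram}}$ together with a map $B_R \to B_{R_{\fram}}$ supplied by functoriality. Alternatively, one may simply assume $R$ local, as the theorem does; I expect the intended statement effectively reduces to that.

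\textbf{Reduction to blowups.} By Chow's lemma / the standard fact that a proper birational map to $\Spec R$ with $R$ reduced is dominated by a blowup $Y' = \Proj R[It] \to Y$ along an ideal $I$ of positive height, it suffices to treat blowups. This holds because $R \to \myR\Gamma(Y,\cO_Y) \to \myR\Gamma(Y', \cO_{Y'})$ is compatible, so a factorization through $Y'$ yields one through $Y$. One can take $I$ of positive height by replacing $I$ with $I + J$, where $J$ is an ideal cutting out the components not meeting the dense open; the blowup of $R$ along $I$ is unchanged on the relevant components. I would handle this carefully since $R$ may be reducible: the birational locus must be dense. Here $Y'$ is $\Proj S$ for the Rees algebra $S$, so \autoref{eq.DimSuppBoundsOnK} applies.

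\textbf{Producing the diagram.} Given $S = R[It]$, the ring $S_{\frn}$ is reduced, essentially of finite type over $R$, and universally catenary. It is also equidimensional of dimension $n+1$, using the equidimensional and universally catenary hypotheses on $R$ and the positive height of $I$. Then $\nu : S_{\frn} \to R$ is a surjection, so the weak CM-assignment gives a commutative square of rings $S_{\frn} \to R$, $B_{S_{\frn}} \to B_R$. Here $B_R$ and $B_{S_{\frn}}$ are weakly cohomologically Cohen-Macaulay, hence weakly balanced BCM by Bhatt's lemma in the local case. \autoref{thm.MainTechnicalTheorem} then gives the factorization.

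\textbf{Main obstacle.} The main obstacle is the dominating-blowup step for reducible reduced $R$ with positive-height center, together with checking the equidimensionality of $S_{\frn}$. For the latter, I would compute $\dim S_{\frn}$ via the components $R/\frp_i$ and their Rees algebras, each of dimension $\dim R/\frp_i + 1$, using universal catenarity.
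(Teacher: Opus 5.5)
Your argument agrees with the paper's when $R$ is local: Chow's lemma reduces to $Y = \Proj R[It]$ with $\mathrm{ht}\, I > 0$, you apply the assignment to $\nu : S_{\frn} \to R$, and then invoke \autoref{thm.MainTechnicalTheorem}. However, your reduction from a general $R$ to the local case has a genuine gap.

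You say that vanishing of $K^{\mydot} \to B_R$ cannot be checked locally. Yet your own plan, applying \autoref{lem.MainTechnicalLemma} after localizing at each maximal ideal, only gives vanishing of $K^{\mydot}_{\fram} \to (B_R)_{\fram}$. So you still need exactly the local-to-global step you set aside. Your fallback does not fix this:
\begin{itemize}
\item $R \to R_{\fram}$ is not a surjection, so an assignment that is functorial only for surjections need not supply $B_R \to B_{R_{\fram}}$.
\item Even if it did, vanishing of $K^{\mydot} \to B_R \to B_{R_{\fram}}$ says nothing about $K^{\mydot} \to B_R$, because the extra map points the wrong way.
\item You cannot ``simply assume $R$ local'': the corollary is stated for non-local $R$. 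For such $R$ the map $\nu : S_{\frn} \to R$ you feed to the assignment does not exist, since localizing $S \to R$ at $\frn$ gives $S_{\frn} \to R_{\fram}$.
\end{itemize}

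The missing observation, which underlies the paper's sentence ``this can be checked after localization'', is that local checking \emph{is} valid here. Since $Y \to \Spec R$ is proper, $\myR\Gamma(Y, \cO_Y)$, and hence $K^{\mydot}$, lies in $D^b_{fg}(R)$. Represent $K^{\mydot}$ by a bounded-above complex $P^{\mydot}$ of finite free modules. Then $\Hom_{D(R)}(K^{\mydot}, B_R) = H^0(\Hom^{\mydot}_R(P^{\mydot}, B_R))$ commutes with localization because $B_R$ is a module. So a morphism $K^{\mydot} \to B_R$ is zero once it is zero after localizing at every maximal ideal.

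With that in hand, the argument runs as follows:
\begin{itemize}
\item Apply the assignment to the \emph{global} surjection $S = R[It] \to R$.
\item Localize the resulting square at $\frn_{\fram} = \fram S + S_{>0}$ and at $\fram$. The localized algebras are weakly balanced big Cohen-Macaulay by the localization statement in the lemma quoted from Bhatt.
\item Apply \autoref{thm.MainTechnicalTheorem} to $R_{\fram}$, noting that $K^{\mydot}_{\fram} = 0$ when $I \not\subseteq \fram$.
\end{itemize}

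Separately, your step replacing $I$ by $I + J$ is muddled, and it would change the blowup; it is also unnecessary. The blowup dominating $Y$ can be taken along an ideal $I$ with $V(I)$ disjoint from the dense open set over which $Y \to \Spec R$ is an isomorphism. Then $I$ lies in no minimal prime and automatically has positive height.
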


\begin{proof}
    Assuming $Y \to \Spec R$ is proper birational, by Chow's lemma, we can dominate $Y \to \Spec R$ by a projective birational map $Y' \to Y \to \Spec R$ so that we have a factorization $R \to \myR\Gamma(Y, \cO_Y) \to \myR\Gamma(Y', \cO_{Y'})$.  It thus suffices to prove the result for $Y'$ and hence replacing $Y$ by $Y'$, we may assume that $Y$ is the blowup of some ideal of positive height.  Letting $K^{\mydot} \to R \to \myR\Gamma(Y, \cO_Y) \xrightarrow{+1}$ be an exact triangle, it suffices to show that the composition $K^{\mydot} \to R \to B_R$ is zero.  But this can be checked after localization.  The result then follows from \autoref{thm.MainTechnicalTheorem}.
%
\end{proof}


\begin{remark}
    Frequently $B_R$ is chosen to be an $R^+$-algebra.  Under moderate hypotheses, it is possible to obtain the same factorization for $Y \to \Spec R$ an alteration by using a Stein factorization.  We do not do this here.
\end{remark}

Based on the behavior of $R^+$ and $\widehat{R^+}$ in positive and mixed characteristic (\cite{BhattDerivedDirectSummand,BhattAbsoluteIntegralClosure}), one might expect the following stronger statement that we do not address.

\begin{question}
    Assuming $B_R$ is obtained from a weakly functorial balanced big Cohen-Macaulay algebra assignment, is there a factorization $R \to \myR\Gamma(Y, \cO_Y) \to B_R$ where the maps are of \emph{derived commutative algebras}?  
\end{question}

\bibliographystyle{skalpha}
\bibliography{MainBib}

\end{document}